\documentclass{article}
\usepackage{relsize}
\usepackage{color}
\usepackage{mathtools}
\usepackage[dvipsnames,table]{xcolor}
\usepackage[noadjust]{cite}

\usepackage{hyperref, enumitem}

\hypersetup{
  colorlinks   = true, 
  urlcolor     = blue, 
  linkcolor    = Purple, 
  citecolor   = red 
}
\usepackage{amsmath,amsthm,amssymb}
\usepackage{xurl}
\hypersetup{breaklinks=true}
\usepackage[margin=2.5cm]{geometry}

\usepackage{graphicx}
\usepackage{algorithm}
\usepackage{algorithmic}
\usepackage[T1]{fontenc}
\usepackage{authblk}
\usepackage[english]{babel}
\usepackage{stackrel}

\usepackage[capitalize,noabbrev]{cleveref}

\usepackage[normalem]{ulem}
\usepackage{cancel}

\usepackage{tikz}
\usetikzlibrary{calc}
\usetikzlibrary{decorations.pathreplacing}
\usetikzlibrary{shapes.geometric,quotes}
\usetikzlibrary{shapes.misc}

\tikzset{cross/.style={cross out, draw=black, minimum size=2*(#1-\pgflinewidth), inner sep=0pt, outer sep=0pt}, 
cross/.default={1pt}}
\theoremstyle{definition}
\newtheorem{theorem}{Theorem}[section]

\newtheorem{proposition}[theorem]{Proposition}
\newtheorem{corollary}[theorem]{Corollary}
\newtheorem{lemma}[theorem]{Lemma}
\newtheorem{definition}[theorem]{Definition}
\newtheorem{example}[theorem]{Example}
\newtheorem{remark}[theorem]{Remark}

\newcommand{\F}{\mathbb F}

\newcommand{\N}{\mathbb N}
\newcommand{\Fq}{\F_q}

\newcommand{\mC}{\mathcal C}
\newcommand{\C}{\mathcal C}
\newcommand{\mD}{\mathcal D}

\newcommand{\mT}{\mathcal T}

\newcommand{\mS}{\mathcal S}
\newcommand{\mU}{\mathcal U}

\DeclareMathOperator{\rk}{rk}

\DeclareMathOperator{\PG}{PG}

\DeclareMathOperator{\GL}{GL}
\DeclareMathOperator{\supp}{supp}
\DeclareMathOperator{\wt}{wt}
\DeclareMathOperator{\dd}{d}
\DeclareMathOperator{\HH}{H}
\DeclareMathOperator{\rs}{rowsp}
\DeclareMathOperator{\cs}{colsp}

\newcommand\qqbin[2]{\left[\begin{matrix} #1 \\ #2 \end{matrix} \right]}

\title{A geometric approach to generalized covering radii of linear codes}

\author[1]{Gianira N. Alfarano}
\author[2]{Giuseppe Marino}
\author[2]{Alessandro Neri}
\author[2]{Rocco Trombetti}

\affil[1]{Universit\'e de Rennes, IRMAR.}
\affil[2]{Department of Mathematics and Applications ``R. Caccioppoli'', University of Naples Federico II.}

\affil[ ]{{\small \texttt{gianira-nicoletta.alfarano@univ-rennes.fr},
\texttt{giuseppe.marino@unina.it},}}\affil[ ]{{\small
 \texttt{alessandro.neri@unina.it},
\texttt{rocco.trombetti@unina.it}}}
\date{}

\begin{document}

\maketitle

\begin{abstract}
Covering problems in coding theory are closely related to finite geometry through the interpretation of the columns of parity-check matrices as point sets in finite vector spaces. Motivated by the recent notion of generalized covering radii of linear codes introduced by Elimelech, Firer and Schwartz, we develop a geometric framework for these parameters. We introduce $(\rho,t)$-saturating sets and show that they are precisely the finite-geometric counterparts of linear codes whose $t$-th generalized
covering radius is at most~$\rho$.
We study the structure of these sets and show that the extremal case $\rho=t$ coincides with the notion of $t$-strong blocking sets. Thus, $(\rho,t)$-saturating sets interpolate between classical saturating sets and strong blocking sets. We provide several equivalent formulations, including affine and dual Grassmannian criteria, derive lower bounds on their size, and give constructions from strong blocking sets, graphs and projective configurations.
\end{abstract}

\medskip

\noindent{\textbf{Keywords}}: generalized covering radii; covering codes; saturating sets; strong blocking sets; linear codes.

\noindent{\textbf{MSC (2020):}}
94B05, 94B65, 51E20, 51E22, 05B40.

\section{Introduction}
Covering problems are a classical topic in coding theory; see e.g. \cite{cohen1997covering}. Given a linear code $\mC\subseteq \F_q^n$, its covering radius measures the smallest integer $\rho$ such that every vector of $\F_q^n$ lies at Hamming distance at most $\rho$ from a codeword of $\mC$. Equivalently, if $H$ is a parity-check matrix of $\mC$, the covering radius is the smallest $\rho$ such that every syndrome can be expressed as a linear combination of at most $\rho$ columns of $H$. This second interpretation provides a natural bridge with finite geometry: the columns of a parity-check matrix may be viewed as vectors of a finite vector space, and covering properties of the code become spanning properties of these vectors; see e.g. \cite{davydov2003saturating, davydov2002constructions, davydov2005locally}.

The \emph{generalized covering radii} of a linear code were recently introduced by Elimelech, Firer and Schwartz as a natural generalization of the classical covering radius in \cite{elimelech2021generalized}. Instead of asking whether each syndrome can be generated by a small number of columns of a parity-check matrix, one asks whether every set of $t$ syndromes can be generated using the same set of at most $\rho$ columns. Thus, the $t$-th generalized covering radius $R_t(\mC)$ is the smallest integer $\rho$ such that every $t$-element subset of the syndrome space is contained in the span of at most $\rho$ columns of a parity-check matrix of $\mC$. The case $t=1$ recovers the ordinary covering radius, which has been intensively investigated in numerous papers and books; see e.g. \cite{cohen1997covering, cohen1985covering, guruswami2005complexity, bartoli2014covering} and references therein.

The introduction of these parameters was motivated in part by applications to linear data-querying protocols, including private information retrieval. In such settings, one stores suitable linear combinations of the database entries, and the covering radius controls the number of stored combinations that must be accessed in order to answer a query. If several queries are grouped together, then the relevant question is no longer how to cover one syndrome at a time, but how to cover a set of syndromes using a common set of stored combinations. This is precisely the role of generalized covering radii. Beyond this motivation, generalized covering radii admit several equivalent descriptions already proposed in \cite{elimelech2021generalized}. Initial work on generalized covering radii focused both on the general theory and on the computation of these parameters for special families of codes. In particular, the generalized covering radii of Reed--Muller codes
were investigated in~\cite{elimelech2022generalized}, where lower and upper bounds were proved in
several asymptotic regimes and exact values were determined in certain extremal cases. More recently, generalized covering radii have also been
studied for BCH codes. In~\cite{elimelech2024second}, the second generalized covering radius of binary primitive double-error-correcting BCH codes was determined, while
in~\cite{ozbudak2026third} the third generalized covering radius of the same family was studied.

The aim of this paper is to develop a finite-geometric approach to generalized covering radii. To this end, we introduce the notion of a \emph{$(\rho,t)$-saturating set}; see Definition \ref{def:rho_t_saturating}. In this language, we say that a linear code has $t$-th generalized covering radius at most $\rho$ if and only if the set of columns of a parity-check matrix forms a $(\rho,t)$-saturating set. 
This notion generalizes the classical theory of saturating sets, which we recover for $t=1$. Such objects were introduced by Ughi in \cite{ughi1987saturated} and have since been widely studied in finite geometry and coding theory, especially through their connection with covering codes; see e.g. \cite{giulietti2013geometry, denaux2022constructing, davydov2011linear}.

A second finite-geometric object naturally appearing in this framework is that of \emph{strong blocking sets}. Introduced in \cite{davydov2011linear}, a set $\mS$ is called a $t$-strong blocking set if, for every $t$-dimensional subspace $\Lambda$, the intersection $\mS\cap \Lambda$ spans $\Lambda$. Strong blocking sets have recently attracted attention because of their relation with \emph{minimal codes} and related extremal problems in finite geometry; see \cite{alfarano2022geometric, bishnoi2024blocking, chen2025t}. In our setting they arise as the extremal case of our generalized saturation property: a set is $(t,t)$-saturating if and only if it is a $t$-strong blocking set; see Proposition~\ref{prop:strong_(t,t)saturating}. Thus $(\rho,t)$-saturating sets also generalize strong blocking sets.

These generalizations suggest a unified approach to several covering and blocking phenomena. On the one hand, strong blocking sets can be used to construct generalized saturating sets, and hence linear codes with controlled generalized covering radii. On the other hand, lower bounds for $(\rho,t)$-saturating sets may be obtained by counting the number of $t$-dimensional subspaces that can be covered by $\rho$-dimensional subspaces generated by a prescribed set of vectors. When $\rho=t$, this returns to the theory of strong blocking sets and its connection with affine blocking sets. When $\rho>t$, however, the relevant problem is no longer an ordinary point-blocking condition, but a higher-dimensional covering problem for $t$-subspaces by generated $\rho$-subspaces.

In this paper we formalize the connection between generalized covering radii and finite geometry. We first recall the coding-theoretic definitions of generalized covering radii and introduce $(\rho,t)$-saturating sets. We then prove the equivalence between $(\rho,t)$-covering linear codes and $(\rho,t)$-saturating sets arising from parity-check matrices. We show that the special case $\rho=t$ is precisely the theory of $t$-strong blocking sets, and we provide several geometric reformulations of the general notion, including affine and dual Grassmannian criteria. We also derive lower bounds on the size of $(\rho,t)$-saturating sets and present constructions obtained from strong blocking sets, graphs and projective-geometric configurations.

The paper is organized as follows. In Section \ref{sec:prel} we recall the necessary background on generalized covering radii, saturating sets, and strong blocking sets. In Section \ref{sec:geom} we introduce $(\rho,t)$-saturating sets and prove their equivalence with $(\rho,t)$-covering linear codes. We also establish several equivalent geometric formulations, including affine and dual Grassmannian criteria, and identify the case $\rho=t$ with $t$-strong blocking sets. In Section \ref{sec:low_bound} we prove a general lower bound on the size of $(\rho,t)$-saturating sets. Finally, in Section \ref{sec:constructions} we present constructions arising from strong blocking sets and from projective configurations associated with graphs.

\medskip

\section*{Acknowledgments}
This research has been partially supported by the Italian National Group for Algebraic and Geometric Structures and their Applications (GNSAGA - INdAM), by Università Italo Francese (UIF/UFI) via PHC Galileo 2026 - G26-260/54322VM and by the University of Naples Federico II, which funded the ``Naples-Rennes Agreement on Scientific Cooperation''.
G. N. Alfarano is supported by the Agence Nationale de la Recherche through grant number ANR-24-CPJ1-0075-01. A. Neri is supported by the INdAM - GNSAGA Project CUP E53C24001950001  ``Noncommutative polynomials in coding theory''.

\medskip

\section{Preliminaries}\label{sec:prel}
In this section we provide the necessary background material for the rest of the paper. Throughout the paper we will use the following notation.

\paragraph{\textbf{Notation.}} Let $q$ be a prime power, and let $\Fq$ denote the finite field with $q$ elements. Let  $r,k,t,n \in \N$ be fixed positive integers. Denote by $[n]:=\{1,\ldots,n\}$. 
For a matrix $A \in \Fq^{r\times n}$ and a subset $I\subseteq [n]$, we denote by $A_I\in \Fq^{r\times |I|}$ the submatrix of $A$ obtained by restricting only to the columns indexed by $I$. We write $\rs_{\Fq}(A)$ and $\cs_{\Fq}(A)$ to indicate the $\Fq$-span of the rows and, respectively, of the columns of $A$. For a given vector subspace $V\subseteq \F_q^k$, we denote by $\PG(V)$ its corresponding projective space. In particular, $\PG(k-1,q)$ denotes the projective space with underlying vector space $\F_q^k$.

\subsection{Linear codes and generalized covering radii}\label{subsec:gen_cov_radii}

In this subsection we recall the notion of linear codes, their generalized covering radii and other related concepts. For more details, we refer the interested reader to \cite{elimelech2021generalized}.

Let $v \in \Fq^n$. The \textbf{support} of $v$ is the set $$\supp(v):=\{i \,:\, v_i \neq 0\}\subseteq [n].$$
Moreover, for a subset $\mC\subseteq \Fq^n$, the support of $\mC$ is
$$\supp(\mC)=\bigcup_{v \in \mC}\supp(v).$$
The \textbf{Hamming} \textbf{weight} of a vector $v\in\F_q^n$ is $\mathrm{wt}_{\HH}(v)=|\supp(v)|$ and
the \textbf{Hamming distance} on $\Fq^n$ is defined as 
$$\dd_{\HH}(u,v)=\wt_{\HH}(u-v).$$

An $[n,n-k]_q$ \textbf{(linear) code} is a  $(n-k)$-dimensional $\F_q$-linear subspace $\mC \subseteq \F_q^n$. The elements of $\mC$ are called \textbf{codewords} and the vectors in $\F_q^k$ are called \textbf{syndromes}. $\mC$ is said to be \textbf{nondegenerate} if there exist no $i\in[n]$ such that $c_i=0$ for every $c\in\mC$.
The \textbf{minimum distance} of
$\mC$ is the integer 
$$d(\mC)=\min\{\mathrm{wt}_{\HH}(c) \mid c \in \mC, \, c \neq 0\}.$$
If $d=d(\mC)$ is known, we say that $\mC$ is an $[n,n-k,d]_q$ code. A matrix $G\in\F_q^{(n-k)\times n}$ is called a \textbf{generator matrix} of $\mC$ if $\rs_{\F_q}(G)=\mC$. A matrix $H\in\F_q^{k\times n}$ is a \textbf{parity-check matrix} of $\mC$ if $\mC=\ker(H)$, i.e. 
$$\C:=\{v\in\F_q^n \; : \; vH^\top = 0\}.$$
The $[n,k]_q$ code generated by a parity-check matrix $H$ is called \textbf{dual code} of $\mC$ and it is denoted by $\mC^\perp$.

Finally, codes $\mathcal{C}$ and $\mathcal{C}^\prime$ are called (\textbf{monomially})  \textbf{equivalent} if there exists an $\F_q$-linear isometry $f: \F_q^n \to \F_q^n$ with $f(\mC)=\mC'$; see~\cite[page 24]{huffman2010fundamentals}.

The covering radius of a code $\mC$ is the maximum distance of $\mC$ to any vector in the ambient space, or the minimum value $\rho$ such that the union of the spheres of radius $\rho$ around each codeword cover the ambient space.
We now recall the definition of generalized covering radii given in \cite{elimelech2021generalized} in the form that will be most useful for the geometric interpretation developed later. If $H$
is a parity-check matrix of a linear code $\mC$, then the
columns of $H$ are vectors of $\F_q^k$ that determine which syndrome can be obtained using a prescribed set of coordinates.

\begin{definition}\label{def:gcr1}
    Let $H\in \Fq^{k\times n}$ be a parity-check matrix of an $[n,n-k]_q$ code $\C$ and let $t \in \N$. The \textbf{$t$-th generalized covering radius} of $\C$ is defined as
    $$R_t(\C):=\max\limits_{\substack{\mS\subseteq \Fq^{k} \\ |\mS|=t}}\min\limits_{\substack{I\subseteq [n] \\ \mS\subseteq \cs_{\Fq}(H_I)}}|I|.$$
\end{definition}

Note that the above definition does not depend on the choice of $H$. When $t=1$, Definition \ref{def:gcr1} retrieves the notion of the classical covering radius of the code. Moreover,  it can be easily seen that $$0\le R_1(\C)\leq R_2(\C)\le \cdots \le R_{k}(\C)=k.$$

In \cite{elimelech2021generalized}, the authors provided several equivalent definitions of generalized covering radii for linear codes. For the sake of completeness, we report them here, but we are not going to use anything different from Definition \ref{def:gcr1}.

For a vector $v \in \Fq^n$ and a positive integer $r$, the \textbf{ball of radius $r$ and center $v$} is the set
$$B_{r}(v):=\{u \in \Fq^n\,:\, \dd_{\HH}(u,v)\le r\}.$$
Moreover, for a subset $I\subseteq [n]$ and a vector $v$, the \textbf{cube with support $I$ and center $v$} is the set
$$Q_I(v):=\{u \in \Fq^n \,:\, \supp(u-v)\subseteq I\}.$$
Observe that 
$$B_r(v)=\bigcup_{I\in \binom{[n]}{r}}Q_I(v).$$

\begin{definition}\label{def:gcr2}
    Let $\C$ be an $[n,n-k]_q$ code. The \textbf{$t$-th generalized covering radius} of $\C$ is 
    $$\min \{ r\in\mathbb{N} \,:\, \forall v_1,\ldots,v_t\in\Fq^{n}, \exists c_1,\ldots, c_t \in \C, I \in \binom{[n]}{r} \mbox{ s.t. } v_i \in Q_I(c_i) \mbox{ for each } i \in [t]\}. $$
\end{definition}

Another interesting definition of generalized covering radii can be given as follows.

\noindent For a given $[n,n-k]_q$ code $\C$ and a positive integer $t$, consider the code 
$$\C_t:=\C\otimes \F_{q^t}=\langle \C\rangle_{\F_{q^t}},$$
that is, the code generated by $\C$ over the extension field $\F_{q^t}$.

\begin{definition}\label{def:gcr4}
    The \textbf{$t$-th generalized covering radius} of an $[n,n-k]_q$ code $\C$ is defined as
    $$R_t(\C):=R_1(\C_t).$$
\end{definition}

\begin{proposition}[{\cite[Lemma 7]{elimelech2021generalized}}]
       Definitions \ref{def:gcr1}, \ref{def:gcr2} and \ref{def:gcr4} coincide.
\end{proposition}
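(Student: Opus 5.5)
We prove that each of Definitions \ref{def:gcr2} and \ref{def:gcr4} gives the same quantity as Definition \ref{def:gcr1}, always working through syndromes. Fix a parity-check matrix $H\in\Fq^{k\times n}$ of $\C$. We may assume $H$ has full rank $k$, which the $[n,n-k]_q$ parameters give, so the syndrome map $\sigma:\Fq^n\to\Fq^k$, $v\mapsto Hv^\top$, is surjective with kernel $\C$. The basic observation is the following: for $v\in\Fq^n$ and $I\subseteq[n]$, there is $c\in\C$ with $v\in Q_I(c)$ if and only if $\sigma(v)\in\cs_{\Fq}(H_I)$. Indeed, $v\in Q_I(c)$ means $e:=v-c$ has $\supp(e)\subseteq I$. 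Then $\sigma(v)=\sigma(e)=H_Ie_I^\top\in\cs(H_I)$. Conversely, if $\sigma(v)=H_Ix^\top$, extend $x$ by zeros to a vector $e$ supported on $I$ and put $c=v-e$; then $\sigma(c)=0$, so $c\in\C$.

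\textbf{Definition \ref{def:gcr1} versus Definition \ref{def:gcr2}.} Applying the observation coordinatewise, the condition in Definition \ref{def:gcr2} for $v_1,\dots,v_t$ and a fixed $I$ becomes $\{\sigma(v_1),\dots,\sigma(v_t)\}\subseteq\cs(H_I)$. Since $\sigma$ is surjective, as $(v_1,\dots,v_t)$ ranges over $(\Fq^n)^t$, the set $\{\sigma(v_i)\}$ ranges over all subsets of $\Fq^k$ of size at most $t$. Two small points remain. First, subsets of size less than $t$ are dominated by size-$t$ subsets containing them; we need $q^k\ge t$, and otherwise both sides are trivially interpreted. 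Second, Definition \ref{def:gcr2} uses $|I|=r$ exactly, while Definition \ref{def:gcr1} uses a minimum over $|I|$. These agree by monotonicity: enlarging $I$ preserves containment, and $r\le n$ is allowed. So the minimal $r$ in Definition \ref{def:gcr2} equals the max--min in Definition \ref{def:gcr1}.

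\textbf{Definition \ref{def:gcr4} versus Definition \ref{def:gcr1}.} The matrix $H$ is still a parity-check matrix of $\C_t$ over $\F_{q^t}$, since $\dim_{\F_{q^t}}\C_t=n-k$ and $H$ kills $\C_t$. By the $t=1$ case of Definition \ref{def:gcr1}, $R_1(\C_t)$ is the least $\rho$ such that every $s\in\F_{q^t}^k$ lies in $\cs_{\F_{q^t}}(H_I)$ for some $|I|\le\rho$. Choose an $\Fq$-basis $\gamma_1,\dots,\gamma_t$ of $\F_{q^t}$ and write $s=\sum_j\gamma_j s_j$ with $s_j\in\Fq^k$. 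The key step is the claim
\[
s\in\cs_{\F_{q^t}}(H_I)\iff s_1,\dots,s_t\in\cs_{\Fq}(H_I).
\]
The backward direction is immediate. For the forward direction, write $H_Ix^\top=s$ with $x=\sum_j\gamma_jx_j$ and $x_j\in\Fq^{|I|}$. Since $H$ has entries in $\Fq$, comparing $\gamma_j$-components gives $H_Ix_j^\top=s_j$. As $s$ ranges over $\F_{q^t}^k$, the tuple $(s_1,\dots,s_t)$ ranges over all $t$-tuples in $\Fq^k$, so $R_1(\C_t)=R_t(\C)$.

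The only delicate points are bookkeeping. One is the equivalence between $t$-subsets and $t$-tuples, where repeated entries give smaller sets; this is handled by monotonicity in $\mS$. The other is the exact-size versus at-most-size convention for $I$. I expect no real obstacle beyond stating these conventions carefully.
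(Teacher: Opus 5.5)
The paper gives no argument of its own for this statement: it imports the equivalence from \cite[Lemma 7]{elimelech2021generalized} and afterwards only ever uses Definition~\ref{def:gcr1}. Your proof is correct and makes the statement self-contained, and it is the natural route.

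\begin{itemize}
\item \emph{Definition~\ref{def:gcr2}.} Passing through the surjective syndrome map $\sigma$, your cube/column-space observation reduces this definition to Definition~\ref{def:gcr1}.
\item \emph{Definition~\ref{def:gcr4}.} Since $H$ has entries in $\Fq$, membership in $\cs_{\F_{q^t}}(H_I)$ can be tested componentwise on an $\Fq$-basis of $\F_{q^t}$. This reduces Definition~\ref{def:gcr4} to Definition~\ref{def:gcr1} as well.
\end{itemize}

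Compared with the bare citation, your argument shows exactly which ingredients are used: surjectivity of $\sigma$, monotonicity in $I$ and in the set of syndromes, and $\Fq$-rationality of $H$. Your first part is valid over any field, so it also justifies computing $R_1(\C_t)$ through the $t=1$ case of Definition~\ref{def:gcr1} over $\F_{q^t}$.

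Two small points should be tightened.

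\begin{itemize}
\item To conclude $\ker_{\F_{q^t}}(H)=\C_t$, it is not enough that $\dim_{\F_{q^t}}\C_t=n-k$ and $H$ kills $\C_t$. You also need $\rk_{\F_{q^t}}(H)=k$, that is, that rank is invariant under field extension. This is the same fact that gives the dimension of $\C_t$.
\item Your caveat $t\le q^k$ is automatic in the range $t\le k$ the paper works in, since $k<q^k$. For larger $t$, Definition~\ref{def:gcr1} as literally written maximizes over an empty family. The meaningful reading is then with $|\mS|\le t$, which is exactly the quantity your tuple-based computation produces. It is cleaner to say this explicitly than to say that both sides are ``trivially interpreted''.
\end{itemize}
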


Whenever linear codes are investigated with the goal of understanding properties related to the covering radius, such codes are called \emph{covering codes}. More precisely, we introduce the following definition.

\begin{definition}
Let $\rho$ be a positive integer. We say that a linear code $\mC$ is a \textbf{$(\rho,t)$-covering code} if its $t$-th generalized covering radius is at most $\rho$. A $(\rho,1)$-covering code is simply a $\rho$-\textbf{covering code} and in this case, $\rho$ is the classical \textbf{covering radius}; see \cite{ughi1987saturated} for more references.
\end{definition}

\subsection{Classical saturating sets and strong blocking sets}\label{subsec:sat_sets_sbs}
Covering radii have a classical geometric counterpart: the columns of a parity-check matrix may be regarded as vectors of a finite vector space, and the covering problem becomes a spanning problem. This leads to the notion of \emph{saturating sets}, which has been introduced in \cite{ughi1987saturated} and subsequently investigated in many other works, where several constructions based on combinatorial and geometric tools were also presented; see \cite{davydov2002constructions, davydov2003saturating, davydov2011linear, denaux2022constructing} to mention but a few.

\begin{definition}\label{def:saturating_classic}
Let $\mS=\{h_1,\ldots,h_n\} \subseteq \F_q^{k}$ be a subset of vectors. 
\begin{enumerate}
    \item[(1)] A vector $v\in\F_q^{k}$ is \textbf{$\rho$-saturated} by $\mS$ if there exists a subspace of dimension at most $\rho$ containing $v$ that is spanned by vectors in $\mS$.
    \item[(2)] $\mS$ is \textbf{$\rho$-saturating} if all the vectors of $\F_q^{k}$ are $\rho$-saturated by $\mS$.
\end{enumerate}
\end{definition}

A stronger geometric condition, which has proved useful in constructing saturating sets, is obtained by requiring not merely that vectors are covered,
but that every subspace of a prescribed dimension is generated by the vectors of
the set lying inside it. This leads to \emph{strong blocking sets}, introduced in \cite{davydov2011linear}.

\begin{definition}
    Let $r\leq k$ be a positive integer. A subset $\mS\subseteq \F_q^{k}$ is an \textbf{$r$-strong blocking set} if for every~$r$-dimensional subspace $\Lambda$ of $\F_q^{k}$ we have $\langle \mS\cap \Lambda\rangle = \Lambda$.
\end{definition}

Precisely, the following result holds.

\begin{theorem}[{\cite[Theorem 3.2]{davydov2011linear}}] \label{thm:sbs-saturating}
Let $\rho\leq k$ be a positive integer. Then any $\rho$-strong blocking set in $\F_{q}^{k}\subseteq \F_{q^\rho}^{k}$ is a $\rho$-saturating set in $\F_{q^\rho}^{k}$.
    
\end{theorem}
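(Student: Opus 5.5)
The plan is to write an arbitrary target vector in a basis of the extension field and reduce to a single $\F_q$-subspace of dimension at most $\rho$. Then the strong blocking property is applied to that subspace.

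Fix an $\F_q$-basis $\omega_1,\ldots,\omega_\rho$ of $\F_{q^\rho}$ over $\F_q$. Let $v\in\F_{q^\rho}^k$ be arbitrary. Expanding each coordinate of $v$ in this basis gives a unique decomposition
\[
v=\sum_{j=1}^{\rho}\omega_j v_j,\qquad v_j\in\F_q^k .
\]
Set $W:=\langle v_1,\ldots,v_\rho\rangle_{\F_q}\subseteq\F_q^k$. Then $\dim_{\F_q}W=:s\le\rho$.

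Since $s\le\rho\le k$, we can extend $W$ to an $\F_q$-subspace $\Lambda\subseteq\F_q^k$ with $\dim_{\F_q}\Lambda=\rho$. By the $\rho$-strong blocking property, $\langle\mS\cap\Lambda\rangle_{\F_q}=\Lambda$. Therefore we can choose $\rho$ vectors $h_{i_1},\ldots,h_{i_\rho}\in\mS\cap\Lambda$ that form an $\F_q$-basis of $\Lambda$.

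Each $v_j$ lies in $W\subseteq\Lambda$, so it is an $\F_q$-linear combination of $h_{i_1},\ldots,h_{i_\rho}$. Consequently
\[
v=\sum_j\omega_jv_j\in\langle h_{i_1},\ldots,h_{i_\rho}\rangle_{\F_{q^\rho}}.
\]
This is a subspace of $\F_{q^\rho}^k$ of dimension at most $\rho$, spanned by vectors of $\mS$ and containing $v$. Hence $v$ is $\rho$-saturated in the sense of Definition~\ref{def:saturating_classic}. Since $v$ was arbitrary, $\mS$ is $\rho$-saturating in $\F_{q^\rho}^k$.

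The only delicate step is passing from $W$ to a subspace of dimension exactly $\rho$. The strong blocking hypothesis only concerns $\rho$-dimensional subspaces, while $W$ may be smaller. Enlarging $W$ to $\Lambda$ handles this, and it is possible precisely because $\rho\le k$. Everything else is linear algebra over the extension: $\F_q$-spans of vectors in $\F_q^k$ extend to $\F_{q^\rho}$-spans, and $\F_{q^\rho}$-dimension is at most the number of spanning vectors.
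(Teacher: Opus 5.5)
Your proof is correct. The paper only cites this result and does not prove it. Its generalization, Theorem~\ref{thm:rhosbs-rhpsaturating}, which gives the statement back for $t=1$, is proved by the Frobenius route of Davydov et al. There one forms $Y=\langle v, v^\sigma,\ldots,v^{\sigma^{\rho-1}}\rangle_{\F_{q^\rho}}$, observes that $Y$ is $\sigma$-invariant, and uses the descent fact that a $\sigma$-invariant subspace has a basis of vectors in $\F_q^k$.

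You instead expand $v$ in an $\F_q$-basis $\omega_1,\ldots,\omega_\rho$ of $\F_{q^\rho}$, which produces the $\F_q$-rational subspace $W$ directly. The two arguments use the same subspace: $\langle W\rangle_{\F_{q^\rho}}=Y$. This holds because $v^{\sigma^i}=\sum_j\omega_j^{q^i}v_j$ and the Moore matrix $(\omega_j^{q^{i}})_{i,j}$ of a basis is invertible.

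Your version is more elementary and self-contained, since no Galois descent is needed. It also extends verbatim to $t$ vectors. Expanding each of them gives $t\rho$ vectors of $\F_q^k$, and you enlarge their span to a $t\rho$-dimensional subspace, which is possible because $t\rho\le k$. This recovers Theorem~\ref{thm:rhosbs-rhpsaturating}. The Frobenius formulation, for its part, is coordinate-free and fits the picture of $\PG(k-1,q)$ as a subgeometry of $\PG(k-1,q^\rho)$.

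You also explicitly enlarge $W$ to dimension exactly $\rho$ before applying the strong blocking hypothesis. This step is genuinely needed, because a $\rho$-strong blocking set need not be $s$-strong blocking for $s<\rho$. The paper's proof of Theorem~\ref{thm:rhosbs-rhpsaturating} leaves it implicit: it applies the hypothesis to a subspace $U$ of dimension $\kappa\le tr$ as if it had dimension $tr$.
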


\begin{remark}
    We point out that if $\mS\subseteq\F_q^{k}$ is an $r$-strong blocking set, for $r\leq k$, then for every $r\leq s\leq k$,~$\mS$ is also an $s$-strong blocking set.
\end{remark}

Recently, $r$-strong blocking sets in $\F_q^k$ have been connected with $(k-r)$-minimal codes; see \cite{chen2025t} for more details. 

\begin{definition}
Let $\mC$ be an $[n,k]_q$ code, and let $0\leq s\leq k-1$. We say that $\mC$ is \textbf{$s$-minimal} if every subcode $\mD\subseteq \mC$ of dimension $s$
is minimal, that is, if for every subcode $\mD'\subseteq \mC$ such that $\supp(\mD')\subseteq \supp(\mD)$ and
$\dim(\mD')=\dim(\mD)$, one has $\mD'=\mD$.
\end{definition}

In particular, in \cite[Theorem 4.5]{chen2025t} it is shown that there is a one-to-one correspondence between monomial equivalence classes of nondegenerate $[n,k]_q$ $(k-r)$-minimal codes and equivalence classes of $r$-strong blocking sets.
This generalizes the result of \cite[Theorem 3.4]{alfarano2022geometric} for $t=k-1$.

\section{Geometric approach to generalized covering radii}\label{sec:geom}
In this section we introduce a geometric approach to the study of generalized covering radii, which generalizes the concept of saturating sets.

\subsection{\texorpdfstring{$(\rho,t)$}{(rho,t)}-saturating sets and covering codes}\label{subsec:relations_covering_sat}

Classical $\rho$-saturating sets give the case in which one wants to
cover one vector at a time. For generalized covering radii, one has to cover several vectors simultaneously using the same small set of columns. This
leads to the following definition, which generalizes Definition \ref{def:saturating_classic}.

\begin{definition}\label{def:rho_t_saturating}
Let $\mS=\{h_1,\ldots,h_n\}\subseteq \F_q^{k}$.
\begin{enumerate}
    \item[(1)] A set $X\subseteq \F_q^{k}$ is \textbf{$\rho$-saturated} by $\mS$ if there exists a subspace of dimension at most $\rho$ containing $X$ that is spanned by vectors in $\mS$.
    \item[(2)]  Let $t,\rho$ be positive integers, with $\rho\geq t$. $\mS$ is \textbf{$(\rho,t)$-saturating} if every set $X\subseteq\F_q^{k}$ with $|X|=t$ is $\rho$-saturated by $\mS$.
\end{enumerate}
\end{definition}

Note that any $(\rho,t)$-saturating set $\mS$ clearly has to span the whole ambient space.
We now illustrate two elementary properties that will be used implicitly in some constructions. We say that two $(\rho,t)$-saturating sets $\mS_1$ and $\mS_2$ are \textbf{(linearly) equivalent} if there exists an $\F_q$-linear isomorphism $f$ of $\F_q^{k}$ such that $f(\mS_1) = \mS_2$

\begin{remark}\label{rem:saturating} Let $\mS$ be a $(\rho,t)$-saturating set. Then:
\begin{enumerate}
    \item[(1)] $\mS$ is $(\rho,s)$-saturating for every $s\le t$.
    \item[(2)] $\mS$ is a $(s\rho,st)$-saturating for every $s\ge 1$.
\end{enumerate}
\end{remark}

We now prove the basic correspondence between the coding-theoretic and the geometric points of view. Under this correspondence, the columns of a parity-check matrix form a $(\rho,t)$-saturating set exactly when the associated code has $t$-th generalized covering radius at most $\rho$.

\begin{theorem}
\label{thm:saturating-covering-correspondence}
Let $\mS=\{h_1,\ldots,h_n\}\subseteq \Fq^{k}$, and assume that the matrix
$H=(h_1\ \cdots\ h_n)\in \Fq^{k\times n}$ has $\rk(H)=k$.
Let $\mC=\ker(H)$. Then $\mS$ is a $(\rho,t)$-saturating set if and only if
$\mC$ is an $[n,n-k]_q$ $(\rho,t)$-covering code.
Moreover, this correspondence induces a one-to-one correspondence between
equivalence classes of $(\rho,t)$-saturating sets in $\Fq^{k}$ and monomial
equivalence classes of nondegenerate $[n,n-k]_q$ $(\rho,t)$-covering codes.
\end{theorem}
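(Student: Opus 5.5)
The first assertion follows by unwinding Definition~\ref{def:gcr1} against Definition~\ref{def:rho_t_saturating}. Since $\rk(H)=k$, the code $\mC=\ker(H)$ has dimension $n-k$, and $H$ is a parity-check matrix of $\mC$. By Definition~\ref{def:gcr1}, $R_t(\mC)\le\rho$ holds exactly when every $t$-subset $X\subseteq\Fq^k$ has an index set $I\subseteq[n]$ with $|I|\le\rho$ and $X\subseteq\cs_{\Fq}(H_I)=\langle h_i : i\in I\rangle$. I will check the two directions separately.

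Suppose $R_t(\mC)\le\rho$. Then, for each $X$, the space $\langle h_i: i\in I\rangle$ has dimension at most $|I|\le\rho$, contains $X$, and is spanned by vectors of $\mS$. So $X$ is $\rho$-saturated, and $\mS$ is $(\rho,t)$-saturating.

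Conversely, suppose $X\subseteq W$, where $W$ is spanned by vectors of $\mS$ and $\dim W\le\rho$. Extract from such a spanning family a basis $h_{i_1},\dots,h_{i_m}$ of $W$, with $m=\dim W\le\rho$. Setting $I=\{i_1,\dots,i_m\}$ gives $X\subseteq\cs_{\Fq}(H_I)$ with $|I|\le\rho$. Taking the maximum over all $X$ yields $R_t(\mC)\le\rho$. The only point needing care here is the passage from "spanned by vectors" to "spanned by at most $\dim W$ columns", which is handled by this basis extraction.

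For the correspondence of equivalence classes, I first set conventions. Since $\mS$ is a set, the $h_i$ are distinct; I will also take them nonzero, so that $\mC$ is projective in the sense needed. The nondegeneracy in the statement should then be read so that it matches distinct nonzero columns, i.e.\ projective codes. I will assume this interpretation, following the convention of \cite[Theorem 4.5]{chen2025t}.

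Next, I will show the invariances. Replacing $H$ by $AH$ with $A\in\GL(k,q)$ does not change $\ker(H)$, so linearly equivalent sets give the same code. Conversely, any two parity-check matrices of $\mC$ differ by such an $A$, so the class of $\mS$ depends only on $\mC$. Permuting the columns of $H$ permutes the coordinates of $\mC$. Scaling columns gives a monomially equivalent code, and the saturating property is unaffected, since spans are unchanged under rescaling of spanning vectors.

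Finally, since the $(\rho,t)$-property is invariant on both sides, the map $[\mS]\mapsto[\ker H]$ is well defined. It is surjective by taking the columns of any parity-check matrix. It is injective because a monomial map between codes transports parity-check matrices to ones differing by a column permutation and scaling, followed by a left $\GL$ factor.

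The main obstacle I expect is the bookkeeping around column scalings: sets of vectors are not invariant under rescaling. I would resolve this either by working projectively, with points of $\PG(k-1,q)$, or by defining the equivalence up to scalar multiples of each vector, exactly as in \cite[Theorem 4.5]{chen2025t} and \cite[Theorem 3.4]{alfarano2022geometric}, and citing that bijection. What remains to verify beyond that bijection is that the $(\rho,t)$-saturating property and the generalized covering radius are both preserved, which the invariance arguments above already establish.
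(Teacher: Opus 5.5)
Your proposal is correct and follows essentially the same route as the paper: you unwind Definition~\ref{def:gcr1} against Definition~\ref{def:rho_t_saturating} to get that $R_t(\mC)\le\rho$ holds if and only if $\mS$ is $(\rho,t)$-saturating, and then check compatibility with the left $\GL(k,q)$-action on $H$ and the right monomial action on its columns. Your explicit basis-extraction step and your two convention caveats are points the paper's own proof uses only implicitly: the set-equivalence must allow rescaling of individual vectors (i.e.\ work projectively), and nondegeneracy has to be read as a condition on the columns of $H$. In particular, the paper invokes column rescaling even though its stated notion of linear equivalence of sets does not include it.
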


\begin{proof}
Since $\rk(H)=k$, the code $\mC=\ker(H)$ has dimension $n-k$, and $H$ is a parity-check matrix of $\mC$.
Let $T=\{s_1,\ldots,s_t\}\subseteq \Fq^{k}$. By definition, $T$ is $\rho$-saturated by $\mS$ if and only if there exists a subset $I\subseteq [n]$, with $|I|\leq \rho$, such that
$$
T\subseteq \langle h_i : i\in I\rangle_{\Fq}.
$$
Since the columns of $H_I$ are precisely the vectors $h_i$ with $i\in I$, this is equivalent to saying that $T\subseteq \cs_{\Fq}(H_I)$.
Therefore, $\mS$ is $(\rho,t)$-saturating if and only if for every $t$-element subset $T\subseteq \Fq^{k}$ there exists $I\subseteq [n]$, with $|I|\leq \rho$, such that $T\subseteq \cs_{\Fq}(H_I)$. By Definition~\ref{def:gcr1}, this is equivalent to require $R_t(\mC)\leq \rho$. Hence $\mS$ is $(\rho,t)$-saturating if and only if $\mC$ is a $(\rho,t)$-covering code.

It remains to check that the correspondence is compatible with equivalence. A change of coordinates in $\Fq^{k}$ replaces $H$ by $AH$, for some $A\in \GL(k,q)$, and does not change the code $\ker(H)$. A permutation of the vectors $h_i$, or a multiplication of the $h_i$'s by non-zero scalars, corresponds to multiplying $H$ on the right by a monomial matrix. This gives a monomially equivalent code. Conversely, if two codes are monomially equivalent, then their parity-check matrices differ, up to a change of basis of the syndrome space, by a permutation and a non-zero rescaling of the columns. Thus, the construction induces a one-to-one correspondence between the stated equivalence classes.
\end{proof}

\subsection{Equivalent geometric formulations}\label{subsec:geometric_formulation}

We start with an equivalent geometric reformulation of the property of a set of being $(\rho,t)$-saturated, based on subspaces. This allows us to work in the Grassmannian, since we can restrict ourselves to covering $t$-dimensional subspaces with $\rho$-dimensional subspaces generated by the elements of $\mS$. 

Let $\mS\subseteq \Fq^k$. For
$1\le t\le \rho\le k$, define the set of \textbf{good $\rho$-subspaces for $\mS$} as
\[
\mathcal U_\rho(\mS):=
\{U\subseteq \F_q^k \, :\, \dim_{\mathbb F_q}(U)= \rho
\text{ and } U=\langle \mS\cap U\rangle\}.
\]
Thus, $\mathcal U_\rho(\mS)$ is the family of subspaces of $\F_q^k$ of dimension
$\rho$ which are generated by their vectors in $\mS$. Subspaces outside $\mU_\rho(\mS)$ are called \textbf{bad $\rho$-subspaces for $\mS$}.

\begin{lemma}[Subspace-covering criterion]\label{lem:subspace_covering}
Let $\mS\subseteq \Fq^k$, and let
$1\leq t\leq \rho\leq k$.
Then $\mS$ is a $(\rho,t)$-saturating set if and only if for every
$t$-dimensional subspace $T\subseteq \Fq^k$, there exists
$U\in \mathcal U_\rho(\mS)$ such that $T\subseteq U$.
\end{lemma}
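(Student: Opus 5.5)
The plan is to prove both implications directly from Definition~\ref{def:rho_t_saturating}. We pass between $t$-element sets $X$ and $t$-dimensional subspaces $T$ by taking spans and bases. We pass between subspaces of dimension \emph{at most} $\rho$ spanned by elements of $\mS$ and members of $\mU_\rho(\mS)$ by a padding argument. The only ingredient beyond unwinding definitions is that $\mS$ spans $\Fq^k$ in both directions. This is what lets us enlarge a subspace spanned by vectors of $\mS$ to one of dimension exactly $\rho$ that is still spanned by vectors of $\mS$.

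\emph{Padding step.} First I will record the following observation. Suppose $\langle \mS\rangle=\Fq^k$ and $W=\langle \mS\cap W\rangle$ with $\dim W\le \rho\le k$. Then there is $U\in\mU_\rho(\mS)$ with $W\subseteq U$. To see this, while $\dim W<\rho\le k$ we have $W\neq \Fq^k=\langle\mS\rangle$, so some $h\in\mS$ lies outside $W$. Replace $W$ by $W+\langle h\rangle$, which is again spanned by vectors of $\mS$ and has dimension one larger. After finitely many steps we reach dimension exactly $\rho$.

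\emph{($\Rightarrow$).} Assume $\mS$ is $(\rho,t)$-saturating; as noted after Definition~\ref{def:rho_t_saturating}, $\mS$ spans $\Fq^k$. Let $T$ be a $t$-dimensional subspace and let $X$ be a basis of $T$, so $|X|=t$. By hypothesis there is a subspace $W\supseteq X$ with $\dim W\le\rho$ and $W$ spanned by vectors of $\mS$. Then $W$ is spanned by $\mS\cap W$ and $T=\langle X\rangle\subseteq W$. The padding step yields $U\in\mU_\rho(\mS)$ with $T\subseteq U$.

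\emph{($\Leftarrow$).} Assume every $t$-subspace lies in a member of $\mU_\rho(\mS)$. Let $X\subseteq\Fq^k$ with $|X|=t$. Then $\dim\langle X\rangle\le t\le k$, so we can extend $\langle X\rangle$ to a $t$-dimensional subspace $T$. Choose $U\in\mU_\rho(\mS)$ with $T\subseteq U$. Then $U$ has dimension $\rho$, is spanned by vectors of $\mS$ (namely $\mS\cap U$), and contains $X$, so $X$ is $\rho$-saturated. Padding is not even needed in this direction.

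The only mildly delicate point is the spanning requirement in the padding step, since it is what makes ``at most $\rho$'' interchangeable with ``exactly $\rho$''. In the forward direction it follows from the remark after Definition~\ref{def:rho_t_saturating}. Everything else is routine.
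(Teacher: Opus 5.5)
Your proof is correct and follows essentially the same route as the paper. In the forward direction you use a basis of $T$ as the $t$-set, and then pad with vectors of $\mS$ outside the current subspace (using that $\mS$ spans $\Fq^k$) to reach dimension exactly $\rho$; in the converse you extend $\langle X\rangle$ to a $t$-dimensional subspace. You are slightly more explicit than the paper on two points: why padding preserves $U=\langle \mS\cap U\rangle$, and the possibility that $X$ is linearly dependent.
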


\begin{proof}
Assume first that every $t$-dimensional subspace $T\subseteq \Fq^k$ is contained
in some $U\in\mathcal U_\rho(\mS)$. Every such~$U$ is generated by $\rho$ vectors of $\mS$ and every set of $t$ vectors of $\F_q^k$ is contained in a $t$-dimensional subspace of $\Fq^k$. Hence, $\mS$ is $(\rho,t)$-saturating.

Conversely, assume that $\mS$ is $(\rho,t)$-saturating. Then, by definition, every
set of $t$ vectors of $\F_q^k$ is contained in a subspace
$U_0\subseteq \Fq^k$ generated by $\rho$ vectors of $\mS$. In particular, their span must be contained in such subspaces. 
If we only consider sets of $t$ vectors in general position, this implies that for every $t$-dimensional subspace $T$ of $\Fq^k$ there exists $U_0\subseteq \Fq^k$ such that  \[
\dim U_0\leq \rho
\qquad\text{and}\qquad
U_0=\left\langle \mS\cap U_0\right\rangle .
\]
If $\dim U_0=\rho$, then $U_0\in\mathcal U_\rho(\mS)$, and we are done.
If $\dim U_0<\rho$, then  we may choose
vectors of $\mS$ outside $U_0$ and enlarge $U_0$ step by step to a subspace $U\subseteq \Fq^k$ such that 
\[
U_0\subseteq U,
\qquad
\dim U=\rho,
\qquad
U=\left\langle \mS\cap U\right\rangle .
\]
This procedure can be done since $\mS$ must span the entire ambient space $\Fq^k$ due to its saturating property.
Thus $U\in\mathcal U_\rho(\mS)$ and $T\subseteq U$. This proves the claim.
\end{proof}

Using Lemma \ref{lem:subspace_covering}, we immediately observe that $(\rho,t)$-saturating sets can be considered as a generalization of $t$-strong blocking set. This is shown in the following result.

\begin{proposition}\label{prop:strong_(t,t)saturating}
A set $\mS\subseteq \F_q^{k}$ is a $(t,t)$-saturating set if and only if it is a $t$-strong blocking set.
\end{proposition}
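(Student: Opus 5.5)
Our plan is to apply Lemma~\ref{lem:subspace_covering} with $\rho=t$ and observe that, for subspaces of equal dimension, containment is equality.

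\emph{First direction.} Assume $\mS$ is $(t,t)$-saturating. By Lemma~\ref{lem:subspace_covering}, every $t$-dimensional subspace $\Lambda\subseteq\F_q^k$ is contained in some $U\in\mathcal U_t(\mS)$. Since $\dim U=t=\dim\Lambda$ and $\Lambda\subseteq U$, we get $U=\Lambda$. Hence $\Lambda=U=\langle \mS\cap U\rangle=\langle\mS\cap\Lambda\rangle$. This is exactly the $t$-strong blocking condition.

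\emph{Second direction.} Assume $\mS$ is a $t$-strong blocking set. Then every $t$-dimensional subspace $T$ satisfies $T=\langle\mS\cap T\rangle$, so $T\in\mathcal U_t(\mS)$. Taking $U=T$ fulfils the criterion of Lemma~\ref{lem:subspace_covering}, so $\mS$ is $(t,t)$-saturating.

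\emph{Hypotheses to check.} Lemma~\ref{lem:subspace_covering} requires $1\le t\le\rho\le k$, which holds here as $t\le k$. Its proof also uses that a $(\rho,t)$-saturating set spans $\F_q^k$. That fact was noted after Definition~\ref{def:rho_t_saturating}, so it is available in the first direction, and the second direction does not need it. No real obstacle is expected: the only substantive step is the dimension-equality argument, and the rest is just unwinding the lemma.
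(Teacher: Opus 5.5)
Your proposal is correct and follows essentially the same route as the paper: both apply Lemma~\ref{lem:subspace_covering} with $\rho=t$, use the dimension count to force $U=T$ in one direction, and observe $T\in\mathcal U_t(\mS)$ in the other. Your check that the lemma's hypotheses hold, including the spanning property needed only for the first direction, is accurate.
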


\begin{proof}
Assume first that $\mS$ is a $(t,t)$-saturating set. Let $T\subseteq \F_q^k$ be a $t$-dimensional subspace. By the subspace-covering criterion of Lemma~\ref{lem:subspace_covering} there exists a subspace $U\subseteq \F_q^k$ with $\dim U=t$ such that $T\subseteq U$ and $U=\langle \mS\cap U\rangle$.
 Hence, $T=U$ and $T=\langle \mS\cap T\rangle$.
Therefore $\mS$ is a $t$-strong blocking set.

Conversely, assume that $\mS$ is a $t$-strong blocking set and let us show that $\mS$ satisfies the subspace-covering criterion. Let $T$ be a $t$-dimensional subspace of $\F_q^k$. By the $t$-strong blocking property, $T=\langle \mS\cap T\rangle$. Thus, $T\in \mathcal U_{t}(\mS)$.
\end{proof}

As a consequence, we obtain a duality between $(t,t)$-covering codes and $(k-t)$-minimal codes.

\begin{corollary}
    An $[n,n-k]_q$ code $\mC$ is $(t,t)$-covering  if and only if $\mC^\perp$ is $(k-t)$-minimal. 
\end{corollary}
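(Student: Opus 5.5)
The plan is to chain together three results already available: Theorem~\ref{thm:saturating-covering-correspondence}, Proposition~\ref{prop:strong_(t,t)saturating}, and the correspondence of \cite[Theorem 4.5]{chen2025t} between $t$-strong blocking sets and $(k-t)$-minimal codes. The central object is a parity-check matrix $H=(h_1\ \cdots\ h_n)\in\Fq^{k\times n}$ of $\mC$ of full rank $k$. The same matrix $H$ is a generator matrix of the $[n,k]_q$ code $\mC^\perp$. Its columns form the (multi)set $\mS=\{h_1,\ldots,h_n\}\subseteq\Fq^k$.

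\emph{Step 1: the code $\mC$.} By Theorem~\ref{thm:saturating-covering-correspondence}, $\mC$ is $(t,t)$-covering if and only if $\mS$ is $(t,t)$-saturating. Proposition~\ref{prop:strong_(t,t)saturating} then shows this holds if and only if $\mS$ is a $t$-strong blocking set in $\Fq^k$.

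\emph{Step 2: the dual code $\mC^\perp$.} Since $H$ generates $\mC^\perp$, the columns of a generator matrix of $\mC^\perp$ are exactly the elements of $\mS$. The result \cite[Theorem 4.5]{chen2025t} identifies such column sets with $(k-t)$-minimal codes: $\mC^\perp$ is $(k-t)$-minimal if and only if $\mS$ is a $t$-strong blocking set. Combining this with Step 1 gives the corollary.

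\emph{Main obstacle: hypotheses of the cited correspondence.} The result in \cite{chen2025t} is stated for nondegenerate codes and projective point sets. I would handle the hypotheses as follows.
\begin{enumerate}
\item[(a)] \emph{Zero columns.} A zero column of $H$ contributes nothing to spans, so it affects neither the saturating property nor the $t$-strong blocking property. On the dual side, it corresponds to a coordinate where $\mC^\perp$ is identically zero. Such a coordinate does not change supports, so it does not affect $s$-minimality either.
\item[(b)] \emph{Repeated or proportional columns.} These change neither spans nor the support-inclusion relations among subcodes. Both properties therefore depend only on the set of projective points $\{\langle h_i\rangle\}$.
\end{enumerate}
Alternatively, I could argue the dual side directly. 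Subcodes $\mD\subseteq\mC^\perp$ of dimension $k-t$ correspond to $t$-dimensional subspaces $\Lambda=\mD^{\perp_{\text{syn}}}$ of $\Fq^k$, via $\mD=\{uH : u\in\Lambda^\perp\}$. Under this correspondence, $\supp(\mD)$ is the complement of $\{i : h_i\in\Lambda\}$. Minimality of every such $\mD$ is then equivalent to requiring that $\Lambda$ be determined by, hence spanned by, $\mS\cap\Lambda$, which is the $t$-strong blocking condition.
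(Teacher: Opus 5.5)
Your proposal is correct and follows the paper's proof, which just chains Theorem~\ref{thm:saturating-covering-correspondence}, Proposition~\ref{prop:strong_(t,t)saturating} and \cite[Theorem~4.5]{chen2025t}. Your discussion of zero and proportional columns, and the direct dictionary $\mD=\{uH : u\in\Lambda^\perp\}$ with $\supp(\mD)=[n]\setminus\{i : h_i\in\Lambda\}$, fills in the nondegeneracy and projective-system hypotheses that the paper's one-line argument leaves implicit.
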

\begin{proof}
    It follows immediately by combining Proposition~\ref{prop:strong_(t,t)saturating} and \cite[Theorem~4.5]{chen2025t}.
\end{proof}


We now give an affine interpretation of $(\rho,t)$-saturating sets, in the spirit of
\cite[Lemma~1.2]{bishnoi2024blocking}.

\begin{proposition}[Affine criterion]\label{prop:affine_rho-t}
Let $\mS\subseteq \Fq^k$, and let $1\le t\le \rho\le k$. Then $\mS$ is
$(\rho,t)$-saturating if and only if for every affine subspace 
$A\subseteq \Fq^k$ of dimension $t-1$ there exists $U\in\mathcal U_\rho(\mS)$ such that $A\subseteq U$.
Equivalently, the family $\mathcal U_\rho(\mS)$ covers all affine
subspaces of $\Fq^k$ of dimension $t-1$ by containment.
\end{proposition}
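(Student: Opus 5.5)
The plan is to reduce everything to the subspace-covering criterion of Lemma~\ref{lem:subspace_covering}. That criterion says $\mS$ is $(\rho,t)$-saturating if and only if every $t$-dimensional linear subspace $T\subseteq\Fq^k$ lies in some $U\in\mathcal U_\rho(\mS)$. It therefore suffices to show that "every $t$-dimensional linear subspace is covered by $\mathcal U_\rho(\mS)$" is equivalent to "every $(t-1)$-dimensional affine subspace is covered by $\mathcal U_\rho(\mS)$". The one basic fact used in both directions is that every $U\in\mathcal U_\rho(\mS)$ is a linear subspace.

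For the direction from linear to affine, take an affine subspace $A=a+W$ with $W$ linear and $\dim W=t-1$. Consider the linear span $\langle A\rangle=\langle a\rangle+W$, which has dimension $t-1$ or $t$ according to whether $a\in W$.
\begin{itemize}
\item If $\dim\langle A\rangle=t$, apply the criterion directly to $T=\langle A\rangle$.
\item If $\dim\langle A\rangle=t-1$ (which forces $A=W$), extend $W$ to any $t$-dimensional linear subspace $T\supseteq W$; this is possible since $t\le\rho\le k$.
\end{itemize}
In both cases the criterion gives some $U\in\mathcal U_\rho(\mS)$ with $A\subseteq\langle A\rangle\subseteq T\subseteq U$.

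For the converse, take a $t$-dimensional linear subspace $T$ and produce a $(t-1)$-dimensional affine subspace $A\subseteq T$ whose linear span is all of $T$. Concretely, choose a hyperplane $W$ of $T$ (so $\dim W=t-1$) and a vector $a\in T\setminus W$, and set $A=a+W$. Since $a\notin W$, we get $\langle A\rangle=\langle a\rangle\oplus W=T$. By hypothesis some $U\in\mathcal U_\rho(\mS)$ contains $A$. Because $U$ is linear, it contains $\langle A\rangle=T$, and Lemma~\ref{lem:subspace_covering} then gives that $\mS$ is $(\rho,t)$-saturating.

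There is no real obstacle here. The only points needing care are the degenerate case where the affine subspace is itself linear (handled by the extension step) and the edge case $t=1$. When $t=1$, affine subspaces of dimension $0$ are single points $\{a\}$. For $a\ne 0$ we take $T=\langle a\rangle$. For $a=0$, covering holds trivially as long as $\mathcal U_\rho(\mS)\neq\emptyset$, which follows from either hypothesis applied to any nonzero vector. The final "equivalently" sentence in the statement is just a rephrasing, so it needs no separate argument.
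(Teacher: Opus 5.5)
Your proof is correct and follows essentially the same route as the paper. In the forward direction you split according to whether $0\in A$, taking $\langle A\rangle$ or extending $A$ to a $t$-space; in the converse you realize $T$ as the span of an affine hyperplane of $T$ missing the origin and use that every member of $\mathcal U_\rho(\mS)$ is linear. Your separate treatment of $t=1$ is harmless but not needed, since the extension step already handles $A=\{0\}$.
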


\begin{proof}
Assume first that $\mS$ is $(\rho,t)$-saturating, and let
$A\subseteq \Fq^k$ be an affine subspace of dimension $t-1$. If $0\notin A$, then
$\langle A\rangle$ is a vector subspace of dimension $t$. Hence, by the $(\rho,t)$-saturating
property, there exists $U\in\mathcal U_\rho(\mS)$ such that $\langle A\rangle\subseteq U$.
In particular, $A\subseteq U$. If $0\in A$, then $A$ is a $(t-1)$-dimensional vector subspace. Since $t\le \rho$, we may extend $A$ to a $t$-dimensional vector subspace $T\subseteq \Fq^k$. Again, by the $(\rho,t)$-saturating property, there exists $U\in\mathcal U_\rho(\mS)$ such that $T\subseteq U$,
and so $A\subseteq U$.

Conversely, assume that every affine subspace of dimension $t-1$ of $\Fq^k$ is contained in some element of~$\mathcal U_\rho(\mS)$. Let $T\subseteq \Fq^k$ be a $t$-dimensional vector subspace. Choose an affine hyperplane $A$ of $T$ not containing the origin.
Then $A$ is an affine subspace of $\Fq^k$ of dimension $t-1$, and $\langle A\rangle=T$.
By assumption, there exists $U\in\mathcal U_\rho(\mS)$ such that $A\subseteq U$.
Since $U$ is a vector subspace, it follows that $T=\langle A\rangle\subseteq U$.
Thus, every~$t$-dimensional vector subspace of $\Fq^k$ is contained in some vector subspace of
dimension at most $\rho$ generated by vectors of $\mS$. This is equivalent to
$\mS$ being $(\rho,t)$-saturating.
\end{proof}

In the special case $\rho=t$, the Proposition~\ref{prop:affine_rho-t} recovers an ordinary
affine blocking condition, which was first found in \cite[Lemma~1.2]{bishnoi2024blocking}. Let $\mS\subseteq \Fq^k$, and let
\[
B_\mS:=\bigcup_{h\in \mS}\langle h\rangle\subseteq \Fq^k
\]
be the cone over $\mS$.

\begin{corollary}\label{cor:affine-blocking-rho=t}
Let $\mS\subseteq \Fq^k$, and let $t\in[k]$. Then $\mS$ is
$(t,t)$-saturating if and only if $B_\mS$ is an affine
$(k-t+1)$-blocking set in $\Fq^k$, i.e. $B_\mS$ meets every affine
subspace of $\Fq^k$ of dimension $t-1$.
\end{corollary}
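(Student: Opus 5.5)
The plan is to specialise the affine criterion, Proposition~\ref{prop:affine_rho-t}, to $\rho=t$. By that result, $\mS$ is $(t,t)$-saturating if and only if every affine subspace $A\subseteq\Fq^k$ of dimension $t-1$ is contained in some $U\in\mathcal U_t(\mS)$. Equivalently, by Proposition~\ref{prop:strong_(t,t)saturating}, it is enough to show that $\mS$ is a $t$-strong blocking set if and only if $B_\mS$ meets every affine $(t-1)$-dimensional subspace. I will prove the corollary in this second form, using the affine-to-linear translation from the proof of Proposition~\ref{prop:affine_rho-t}.

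For the forward direction, assume $\mS$ is a $t$-strong blocking set and let $A$ be an affine subspace of dimension $t-1$. If $0\in A$, then $0\in B_\mS$ (each $\langle h\rangle$ contains $0$, and $\mS$ is nonempty since it spans), so $A$ is met. If $0\notin A$, then $T=\langle A\rangle$ has dimension $t$, and $A$ is an affine hyperplane of $T$ not through the origin. The strong blocking property gives $\langle \mS\cap T\rangle=T$, so some $h\in\mS\cap T$ lies outside the linear hyperplane $A-A$ of $T$ parallel to $A$. Then the line $\langle h\rangle$ is not parallel to $A$ inside $T$, so it meets $A$ in exactly one point. That point lies in $B_\mS\cap A$.

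For the converse, assume $B_\mS$ meets every affine $(t-1)$-subspace and let $T$ be a $t$-dimensional subspace. Suppose, for contradiction, that $W:=\langle\mS\cap T\rangle\subsetneq T$. Choose a linear hyperplane $H$ of $T$ with $W\subseteq H$, and a vector $v\in T\setminus H$. The affine subspace $A=v+H$ has dimension $t-1$ and does not contain $0$. By hypothesis there are $h\in\mS$ and $\lambda\in\Fq$ with $\lambda h\in A$. Since $0\notin A$, we have $\lambda\neq0$, so $h\in T$, hence $h\in\mS\cap T\subseteq H$. But then $\lambda h\in H\cap(v+H)=\emptyset$, a contradiction. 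Thus $\langle\mS\cap T\rangle=T$, so $\mS$ is a $t$-strong blocking set, and Proposition~\ref{prop:strong_(t,t)saturating} gives that it is $(t,t)$-saturating.

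There is no real obstacle; the argument is elementary linear algebra. The step needing most care is the converse: choosing the hyperplane $H\supseteq W$ and the translate $v+H$ avoiding the origin, and using $\lambda\neq 0$ to conclude $h\in T$. The degenerate case $0\in A$ in the forward direction is handled separately as above.
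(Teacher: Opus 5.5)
Your proof is correct and follows essentially the same route as the paper. Both arguments pass through the $t$-strong blocking characterization: the forward direction picks $h\in\mS\cap T$ outside the direction hyperplane of $A$ (the paper's $f(h)\neq 0$), and the converse uses an origin-free translate of a hyperplane of $T$ containing $\mS\cap T$. Your converse also makes explicit the step $\lambda\neq 0\Rightarrow h\in T$, which the paper leaves implicit.
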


\begin{proof}
By  Proposition~\ref{prop:affine_rho-t}, $\mS$ is $(t,t)$-saturating if and only if every
affine subspace $A\subseteq \Fq^k$ of dimension $t-1$ is contained in a $t$-dimensional vector
subspace $U$ such that $U=\langle \mS\cap U\rangle$.

Assume first that $\mS$ is $(t,t)$-saturating. Let $A\subseteq \Fq^k$ be an
affine subspace of dimension $t-1$. If $0\in A$, then $A\cap B_\mS\neq\emptyset$, since
$0\in B_\mS$. Suppose then that $0\notin A$, and set $T=\langle A\rangle$.
Then $\dim T=t$. Since $\mS$ is $(t,t)$-saturating, we have $T=\langle \mS\cap T\rangle$.
Write
\[
A=\{x\in T:\ f(x)=c\}
\]
for some nonzero linear functional $f:T\to\mathbb F_q$ and some
$c\in\mathbb F_q^\ast$. Since $\mS\cap T$ spans $T$, there exists
$h\in \mS\cap T$ such that $f(h)\neq0$. Hence
\[
\frac{c}{f(h)}h\in A\cap \langle h\rangle\subseteq A\cap B_\mS.
\]
Thus $B_\mS$ meets every affine subspace of dimension $t-1$.

Conversely, suppose that $B_\mS$ meets every affine subspace of $\Fq^k$ of dimension $t-1$.
Let $T\subseteq \Fq^k$ be a $t$-dimensional subspace. We show that
$\langle \mS\cap T\rangle=T$. If this were not the case, then $\mS\cap T$
would be contained in a hyperplane $\ker(f)$ of $T$, for some nonzero
linear functional $f:T\to\mathbb F_q$. Then the affine hyperplane
\[
A=\{x\in T \,:\, f(x)=1\}
\]
would satisfy $A\cap B_\mS=\emptyset$, a contradiction. Therefore
$\langle \mS\cap T\rangle=T$, and $\mS$ is $(t,t)$-saturating.
\end{proof}

\begin{remark}
In \cite[Lemma~1.2]{bishnoi2024blocking}, the parameter $s$ denotes the affine codimension:
an affine $s$-blocking set in $\mathbb F_q^k$ meets every affine subspace
of codimension $s$. In the present paper, the parameter $t$ denotes the
dimension of the vector subspaces appearing in the definition of $t$-strong
blocking sets. Hence the translation between the two conventions is $s=k-t+1$.
Thus, under our convention, a $(t,t)$-saturating set gives rise to an affine
$(k-t+1)$-blocking set, not to an affine $(t+1)$-blocking set.
\end{remark}

The affine criterion of Proposition~\ref{prop:affine_rho-t} gives a useful
primal reformulation of the $(\rho,t)$-saturating property: one has to cover all affine subspaces of dimension $t-1$ by $\rho$-dimensional subspaces generated by the set. We now propose a dual version of the same condition.
For ease of notation, in this result we set $V=\Fq^k$. Passing to the dual space $V^\vee$, the containment $T\subseteq U$, where $\dim T=t$ and $\dim U=\rho$, becomes $U^\perp \subseteq T^\perp$, with $\dim U^\perp=k-\rho$ and $\dim T^\perp=k-t$.
Thus, instead of asking whether every $t$-dimensional subspace of $V$ is contained in a good $\rho$-dimensional subspace, we may ask whether every $(k-t)$-dimensional subspace of $V^\vee$ contains at least one good $(k-\rho)$-dimensional subspace. The bad subspaces are precisely those whose orthogonal complements are not generated by the vectors of $\mS$ lying inside them. This gives the following dual Grassmannian formulation.

\begin{proposition}[Dual Grassmannian criterion]\label{prop:dual_Grassmannian}
Let $\mS\subseteq V=\Fq^k$, and let $1\leq t\leq \rho\leq k$.
Define
\[
\mathcal B_\rho(\mS):=
\left\{
M\subseteq V^\vee:
\dim M=k-\rho
\text{ and }
M^\perp\neq
\left\langle \mS\cap M^\perp\right\rangle
\right\}.
\]
Then $\mS$ is a $(\rho,t)$-saturating set if and only if for every
subspace $L\subseteq V^\vee$ of dimension $k-t$, there exists a subspace $M\subseteq L$
of dimension $k-\rho$ such that $M\notin \mathcal B_\rho(\mS)$.
Equivalently, $\mS$ is a $(\rho,t)$-saturating set if and only if every  $(k-t)$-dimensional subspace of the dual space
$V^\vee$ contains at least one good 
$(k-\rho)$-dimensional subspace.
\end{proposition}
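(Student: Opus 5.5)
The plan is to deduce the statement directly from the subspace-covering criterion of Lemma~\ref{lem:subspace_covering}, transported to $V^\vee$ through the annihilator map $W\mapsto W^\perp$. I will use the standard facts about this map. It is an inclusion-reversing bijection between the subspaces of $V$ of dimension $d$ and the subspaces of $V^\vee$ of dimension $k-d$. It satisfies $(W^\perp)^\perp=W$ under the canonical identification $V^{\vee\vee}=V$. Hence $T\subseteq U$ holds if and only if $U^\perp\subseteq T^\perp$.

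The first step is to identify the complement of $\mathcal B_\rho(\mS)$ with $\mathcal U_\rho(\mS)$. A subspace $M\subseteq V^\vee$ of dimension $k-\rho$ lies outside $\mathcal B_\rho(\mS)$ exactly when $M^\perp=\langle \mS\cap M^\perp\rangle$. Since $\dim M^\perp=\rho$, this says precisely that $M^\perp\in\mathcal U_\rho(\mS)$. Therefore $M\mapsto M^\perp$ is a bijection between the $(k-\rho)$-subspaces of $V^\vee$ not in $\mathcal B_\rho(\mS)$ (the ``good'' ones of the statement) and $\mathcal U_\rho(\mS)$. Its inverse is $U\mapsto U^\perp$.

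The second step translates the covering condition. Suppose $\mS$ is $(\rho,t)$-saturating and let $L\subseteq V^\vee$ have dimension $k-t$. Then $T:=L^\perp$ has dimension $t$. Lemma~\ref{lem:subspace_covering} gives $U\in\mathcal U_\rho(\mS)$ with $T\subseteq U$. Setting $M:=U^\perp$, we get $M\subseteq T^\perp=L$ with $\dim M=k-\rho$, and $M\notin\mathcal B_\rho(\mS)$ because $M^\perp=U$.

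Conversely, suppose the dual condition holds and let $T\subseteq V$ have dimension $t$. Apply the hypothesis to $L:=T^\perp$ to obtain a good $M\subseteq L$. Then $U:=M^\perp$ lies in $\mathcal U_\rho(\mS)$ and $T=L^\perp\subseteq M^\perp=U$, so Lemma~\ref{lem:subspace_covering} gives that $\mS$ is $(\rho,t)$-saturating. The final ``equivalently'' sentence is simply a restatement using the terminology ``good'' for $M\notin\mathcal B_\rho(\mS)$.

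There is no real obstacle here. The only point requiring care is to be explicit about the canonical identification $V^{\vee\vee}\cong V$, so that $M^\perp$ for $M\subseteq V^\vee$ is read as a subspace of $V$. One must also check the dimension count $\dim W^\perp=k-\dim W$ in both directions, which is what makes the map a bijection on the relevant Grassmannians.
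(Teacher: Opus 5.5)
Your proposal is correct and follows essentially the same route as the paper. You reduce to the subspace-covering criterion (Lemma~\ref{lem:subspace_covering}), transport it to $V^\vee$ via the inclusion-reversing bijection $W\mapsto W^\perp$, and identify the good $(k-\rho)$-subspaces with $\mathcal U_\rho(\mS)$. You are slightly more explicit than the paper in invoking the lemma, in using $V^{\vee\vee}\cong V$, and in writing out both directions separately.
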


\begin{proof}
Let $T\subseteq V$ be a $t$-dimensional subspace. Then
$T^\perp\subseteq V^\vee$ has dimension $k-t$. A $\rho$-dimensional subspace $U\subseteq V$
contains $T$ if and only if $U^\perp\subseteq T^\perp$.
Moreover, $\dim U^\perp=k-\rho$.
Thus, choosing a $\rho$-dimensional subspace $U$ containing $T$ is
equivalent to choosing a $(k-\rho)$-dimensional subspace $M\subseteq T^\perp$.
The subspace $U$ is generated by $\mS\cap U$ if and only if
$M=U^\perp$ is not in $\mathcal B_\rho(\mS)$. Therefore every
$t$-dimensional subspace $T$ is contained in an $\mS$-generated
$\rho$-dimensional subspace if and only if every subspace
$L=T^\perp\subseteq V^\vee$ of dimension $k-t$ contains at least one
$(k-\rho)$-dimensional subspace outside $\mathcal B_\rho(\mS)$.
\end{proof}


\section{Lower bound on the size of a \texorpdfstring{$(\rho,t)$-}-saturating set}\label{sec:low_bound}

We now derive a lower bound on the size of $(\rho,t)$-saturating sets. In the
special case $\rho=t$, Proposition~\ref{prop:strong_(t,t)saturating}
identifies $(t,t)$-saturating sets with $t$-strong blocking sets. Moreover,
by Corollary~\ref{cor:affine-blocking-rho=t}, this condition can be translated
into an ordinary affine blocking condition for the cone
\[
B_\mS=\bigcup_{h\in \mS}\langle h\rangle .
\]
Thus, when $\rho=t$, one may obtain lower bounds by applying
Jamison--Brouwer--Schrijver-type lower bounds for affine blocking sets \cite{jamison1977covering,brouwer1978blocking}, as observed in \cite{bishnoi2024blocking}, generalizing a lower bound obtained in \cite{alfarano2022three} for $t=k-1$.

However, for general $\rho>t$ the affine interpretation obtained in
Proposition~\ref{prop:affine_rho-t} is no longer an ordinary point-blocking
condition. It says that every affine $(t-1)$-dimensional affine subspace is contained in a
$\rho$-dimensional subspace generated by vectors of $\mS$. Therefore, the
appropriate object is a covering of affine flats, or equivalently of
$t$-dimensional vector subspaces, by $\mS$-generated $\rho$-spaces.
This suggests a different counting argument, inspired by Denaux's lower bound
for saturating sets; see \cite[Proposition~4.2.1]{denaux2022constructing}. 

Before that, we need the following auxiliary result on $q$-binomial coefficients.

\begin{lemma}\label{lem:qbinom}
For all positive integers $a\geq b\geq t$, we have
$$
\frac{\qqbin{a}{t}_q}{\qqbin{b}{t}_q}\geq q^{t(a-b)}.
$$
\end{lemma}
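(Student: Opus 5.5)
We prove the inequality by writing both Gaussian binomials as products and comparing them factor by factor. Recall
\[
\qqbin{a}{t}_q=\prod_{i=0}^{t-1}\frac{q^{a-i}-1}{q^{t-i}-1},
\]
and similarly for $b$. The denominators $\prod_{i=0}^{t-1}(q^{t-i}-1)$ are identical, so they cancel in the ratio, giving
\[
\frac{\qqbin{a}{t}_q}{\qqbin{b}{t}_q}=\prod_{i=0}^{t-1}\frac{q^{a-i}-1}{q^{b-i}-1}.
\]
This is legitimate because $b\ge t$ ensures $b-i\ge 1$, so no denominator vanishes.

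It therefore suffices to show that each factor satisfies
\[
\frac{q^{a-i}-1}{q^{b-i}-1}\ge q^{a-b}.
\]
Set $m=b-i\ge 1$ and $d=a-b\ge 0$. Since $q^m-1>0$, the claim is equivalent to $q^{m+d}-1\ge q^{d}(q^m-1)=q^{m+d}-q^d$, that is, to $q^d\ge 1$, which holds. Multiplying the $t$ factor inequalities, all of whose terms are positive, yields the bound $q^{t(a-b)}$.

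There is no real obstacle here. The only points requiring care are that the common denominator cancels exactly and that the hypotheses $a\ge b\ge t$ make every term positive and well defined. In the case $a=b$ the inequality holds with equality.
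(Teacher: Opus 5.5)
Your proposal is correct and follows essentially the same route as the paper: cancel the common denominator in the product formula, then show each factor $\frac{q^{a-i}-1}{q^{b-i}-1}$ is at least $q^{a-b}$ by reducing to $q^{a-b}\geq 1$, and multiply the $t$ inequalities. Your explicit remarks on positivity and on $b\geq t$ keeping the denominators nonzero are a small addition; the paper leaves these implicit.
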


\begin{proof}
Using the product formula for Gaussian binomial coefficients, we get
$$
\frac{\qqbin{a}{t}_q}{\qqbin{b}{t}_q}
=
\frac{\prod_{i=0}^{t-1}(q^{a-i}-1)}
{\prod_{i=0}^{t-1}(q^{b-i}-1)}
=
\prod_{i=0}^{t-1}\frac{q^{a-i}-1}{q^{b-i}-1}.
$$
We show that each factor is at least $q^{a-b}$. Indeed, for every $i\in\{0,\ldots,t-1\}$, set $A=a-i$ and $B=b-i$. Then $A\geq B$ and
$$
\frac{q^A-1}{q^B-1}\geq q^{A-B}.
$$
This is equivalent to $q^A-1\geq q^{A-B}(q^B-1)=q^A-q^{A-B}$, which holds since $q^{A-B}\geq 1$. Therefore,
$$
\frac{q^{a-i}-1}{q^{b-i}-1}\geq q^{a-b}
$$
for every $i\in\{0,\ldots,t-1\}$. Multiplying these $t$ inequalities, we obtain
$$
\frac{\qqbin{a}{t}_q}{\qqbin{b}{t}_q}\geq q^{t(a-b)}.
$$
\end{proof}

\begin{theorem}\label{thm:lower_bound}
Let $\mS$ be a $(\rho,t)$-saturating set in $\Fq^k$. Then
$$
|\mS| > \frac{\rho}{e}q^{\frac{t(k-\rho)}{\rho}}+ \frac{\rho-1}{2},
$$
where $e$ denotes Euler's number.
\end{theorem}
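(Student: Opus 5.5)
We will argue by double counting in the Grassmannian, following the subspace-covering criterion (Lemma~\ref{lem:subspace_covering}). Set $n=|\mS|$. By Lemma~\ref{lem:subspace_covering}, every $t$-dimensional subspace $T\subseteq\Fq^k$ is contained in some $U\in\mathcal U_\rho(\mS)$. Every $U\in\mathcal U_\rho(\mS)$ is spanned by $\rho$ linearly independent vectors of $\mS$, so $|\mathcal U_\rho(\mS)|\le\binom{n}{\rho}$. Each such $U$ contains exactly $\qqbin{\rho}{t}_q$ subspaces of dimension $t$. Counting pairs $(T,U)$ with $T\subseteq U$ therefore gives
\[
\qqbin{k}{t}_q\le \binom{n}{\rho}\qqbin{\rho}{t}_q .
\]
Lemma~\ref{lem:qbinom} with $a=k$ and $b=\rho$ then yields $\binom{n}{\rho}\ge q^{t(k-\rho)}$.

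The remaining step is to turn this binomial inequality into the stated bound, which is where the terms $\rho/e$ and $(\rho-1)/2$ come from. We bound $\binom{n}{\rho}=\frac{1}{\rho!}\prod_{i=0}^{\rho-1}(n-i)$ from above. By AM--GM applied to the factors $n-i$, whose arithmetic mean is $n-\frac{\rho-1}{2}$, we get
\[
\prod_{i=0}^{\rho-1}(n-i)\le\left(n-\tfrac{\rho-1}{2}\right)^{\rho}.
\]
For the factorial we use the standard strict bound $\rho!>(\rho/e)^{\rho}$, which follows from $e^{\rho}=\sum_j \rho^j/j!>\rho^\rho/\rho!$. Combining the two estimates,
\[
q^{t(k-\rho)}\le\binom{n}{\rho}<\left(\frac{e\,(n-\frac{\rho-1}{2})}{\rho}\right)^{\rho}.
\]
Taking $\rho$-th roots gives $n-\frac{\rho-1}{2}>\frac{\rho}{e}q^{t(k-\rho)/\rho}$, which is the claim.

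Two small points need care. First, the AM--GM step requires the factors $n-i$ to be nonnegative, that is $n\ge\rho-1$. This holds because $\mS$ spans $\Fq^k$, so $n\ge k\ge\rho$. If $\binom{n}{\rho}$ vanished, the counting inequality would already be violated. Second, the strictness of the final inequality comes only from $\rho!>(\rho/e)^\rho$, which is valid for all $\rho\ge1$.

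The main obstacle is the step $|\mathcal U_\rho(\mS)|\le\binom{n}{\rho}$. Each good $\rho$-subspace has at least one basis drawn from $\mS$, so assigning to each $U$ one such basis gives an injection into the $\rho$-subsets of $\mS$, and the bound holds. No finer counting is needed, since the statement absorbs all the slack into the constants.
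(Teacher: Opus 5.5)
Your proof is correct and follows essentially the same route as the paper: the double count $\binom{|\mS|}{\rho}\qqbin{\rho}{t}_q\ge\qqbin{k}{t}_q$, then Lemma~\ref{lem:qbinom}, then the estimate $\rho!>(\rho/e)^\rho$, and finally AM--GM on the factors $|\mS|-i$. You also make explicit two points the paper leaves implicit: the injection of $\mathcal U_\rho(\mS)$ into the $\rho$-subsets of $\mS$, and the check $|\mS|\ge k\ge\rho$ that makes the AM--GM factors nonnegative.
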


\begin{proof}
Since $\mS$ is a $(\rho,t)$-saturating set, every $t$-dimensional subspace of
$\Fq^k$ is contained in a subspace of dimension at most $\rho$ generated by
vectors of $\mS$.
We count the $t$-dimensional subspaces of $\Fq^k$. There are $\qqbin{k}{t}_q$
such subspaces. On the other hand, any fixed choice of $\rho$ vectors of $\mS$
spans a subspace of dimension at most~$\rho$, and hence contains at most
$\qqbin{\rho}{t}_q$ many $t$-dimensional subspaces. 
Moreover, since any subset of $\mS$ with size \textit{at most} $\rho$ can be extended to a  subset of $\mS$ of size \textit{exactly} $\rho$, every $t$-dimensional subspace is contained in the span of some subset of $\mS$ of size $\rho$.

Therefore,
$$
\binom{|\mS|}{\rho}\qqbin{\rho}{t}_q\geq \qqbin{k}{t}_q.
$$
Expanding the binomial coefficient we get
$$
\frac{\prod\limits_{i=0}^{\rho-1}(|\mS|-i)}{\rho!}
\geq
\frac{\qqbin{k}{t}_q}{\qqbin{\rho}{t}_q},
$$
and hence, by Lemma~\ref{lem:qbinom},
$$
\prod_{i=0}^{\rho-1}(|\mS|-i)
\geq
\rho!\frac{\qqbin{k}{t}_q}{\qqbin{\rho}{t}_q}
\geq
\rho!q^{t(k-\rho)}.
$$
Taking the $\rho$-th root on both sides, we obtain
$$
\sqrt[\rho]{\prod_{i=0}^{\rho-1}(|\mS|-i)}
\geq
\sqrt[\rho]{\rho!}\,q^{\frac{t(k-\rho)}{\rho}}.
$$
Now we use the standard estimate
$
\sqrt[\rho]{\rho!}>\frac{\rho}{e}
$
and get
$$
\sqrt[\rho]{\prod_{i=0}^{\rho-1}(|\mS|-i)}
>
\frac{\rho}{e}q^{\frac{t(k-\rho)}{\rho}}.
$$
Finally, by the AM-GM inequality,
$$
\sqrt[\rho]{\prod_{i=0}^{\rho-1}(|\mS|-i)}
\leq
\frac{1}{\rho}\sum_{i=0}^{\rho-1}(|\mS|-i)
=
|\mS|-\frac{\rho-1}{2}.
$$
Combining the last two inequalities we conclude that
$$
|\mS|-\frac{\rho-1}{2}
>
\frac{\rho}{e}q^{\frac{t(k-\rho)}{\rho}}.
$$
\end{proof}

Note that if $\rho=t$ we immediately have a lower bound on the size of $t$-strong blocking sets. 
\begin{corollary}\label{cor:lowerbound_strong}
    Let $\mS$ be a $t$-strong blocking set in $\F_q^{k}$. Then 
     $$|\mS|> \frac{t}{e}q^{(k-t)} + \frac{t-1}{2}.$$
\end{corollary}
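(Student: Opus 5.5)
This follows by specializing Theorem~\ref{thm:lower_bound} to $\rho=t$, using Proposition~\ref{prop:strong_(t,t)saturating} to make the specialization legitimate.

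First, I will use Proposition~\ref{prop:strong_(t,t)saturating} to move from strong blocking sets to saturating sets. If $\mS\subseteq\F_q^k$ is a $t$-strong blocking set, then it is $(t,t)$-saturating. The hypothesis $1\le t\le\rho\le k$ of the earlier results holds with $\rho=t$, since $t\le k$ is implicit in the definition of a $t$-strong blocking set.

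Next, I apply Theorem~\ref{thm:lower_bound} with $\rho=t$. The exponent becomes
\[
\frac{t(k-\rho)}{\rho}=\frac{t(k-t)}{t}=k-t,
\]
and the additive term becomes $\frac{t-1}{2}$. This gives
\[
|\mS|>\frac{t}{e}q^{k-t}+\frac{t-1}{2},
\]
which is exactly the claimed bound.

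There is no real obstacle here; the corollary is a direct substitution. The only thing I will check is that the proof of Theorem~\ref{thm:lower_bound} still goes through when $\rho=t$. It does: Lemma~\ref{lem:qbinom} is applied with $a=k$ and $b=\rho=t$, which satisfies $a\ge b\ge t$, and in that case $\qqbin{\rho}{t}_q=1$. The strict inequality $\sqrt[t]{t!}>t/e$ holds for every $t\ge 1$, so the strict conclusion carries over unchanged.
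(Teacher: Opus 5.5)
Your proposal is correct and is the same argument as the paper, which obtains the corollary by substituting $\rho=t$ into Theorem~\ref{thm:lower_bound}, made legitimate by Proposition~\ref{prop:strong_(t,t)saturating}. Your check that the theorem's proof still works at $\rho=t$ (where $\qqbin{t}{t}_q=1$ and $\sqrt[t]{t!}>t/e$ holds strictly for all $t\ge 1$) is a harmless sanity check, since the theorem as stated already covers $\rho=t$.
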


\begin{remark}\label{rem:lowerbound_strong_bishnoi}
    It must be noted that the bound of Corollary~\ref{cor:lowerbound_strong} is not the best known lower bound for the size of a $t$-strong blocking set. Indeed, as already mentioned at the beginning of this section, in \cite[Remark~4.2]{bishnoi2024blocking} it was shown that every $t$-strong blocking set $\mS\subseteq \Fq^k$ has size  $$|\mS|\ge \frac{q^{k-t+1}}{q-1}t.$$ 
    Despite that, the two bounds have the same order of magnitude in $q$, and differ only by the constant factor $e$ asymptotically.
\end{remark}


\section{Constructions}\label{sec:constructions}
In this section we provide some constructions of  $(\rho,t)$-saturating sets. 

\subsection{Constructions from strong blocking sets}\label{subsec:constr_sbs}
One of the most immediate ways of constructing $(\rho,t)$-saturating sets is with the aid of strong blocking sets. 
There is a first straightforward way to do this, by combining Theorem~\ref{thm:sbs-saturating} with Remark~\ref{rem:saturating}(2). The outcome is the following.

\begin{corollary}\label{cor:DGMP}
    Any $r$-strong blocking set in $\Fq^{k}\subseteq \F_{q^r}^k$ is a $(tr,t)$-saturating set in $\F_{q^r}^{k}$.
\end{corollary}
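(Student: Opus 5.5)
The plan is to chain the two earlier results mentioned just before the statement. Let $\mS\subseteq\Fq^k$ be an $r$-strong blocking set, regarded as a subset of $\F_{q^r}^k$. First, Theorem~\ref{thm:sbs-saturating} (with $\rho=r$) tells us that $\mS$ is an $r$-saturating set in $\F_{q^r}^k$. In the language of Definition~\ref{def:rho_t_saturating}, this means $\mS$ is $(r,1)$-saturating: every single vector of $\F_{q^r}^k$ lies in an $\F_{q^r}$-subspace spanned by at most $r$ elements of $\mS$. I would check explicitly that Definition~\ref{def:saturating_classic} with parameter $\rho$ coincides with the $(\rho,1)$ case of Definition~\ref{def:rho_t_saturating}, since a one-element set $X=\{v\}$ is $\rho$-saturated exactly when $v$ is.

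Second, I would apply Remark~\ref{rem:saturating}(2) with $\rho=r$, the value $1$ in place of $t$, and $s=t$. This gives that $\mS$ is $(tr,t)$-saturating in $\F_{q^r}^k$, which is the claim. For completeness, I would sketch the reason the remark holds in this instance. Given $X=\{x_1,\dots,x_t\}\subseteq\F_{q^r}^k$, each $x_i$ lies in the span of a subset $I_i\subseteq\mS$ with $|I_i|\le r$. Then $X$ lies in the span of $I_1\cup\dots\cup I_t$, which has at most $tr$ elements and hence spans a subspace of dimension at most $tr$ generated by vectors of $\mS$.

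There is essentially no obstacle; the statement is a direct composition of the two results. The only points needing care are bookkeeping. The field over which spans are taken is $\F_{q^r}$ throughout. The condition $\rho\ge t$ in Definition~\ref{def:rho_t_saturating} holds here since $tr\ge t$. If $tr>k$, the statement is trivially true because $\mS$ spans $\F_{q^r}^k$ (an $r$-strong blocking set spans $\Fq^k$), so a spanning subset of size $k\le tr$ works.
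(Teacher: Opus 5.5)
Your proposal is correct and matches the paper's argument: it applies Theorem~\ref{thm:sbs-saturating} with $\rho=r$ to get an $(r,1)$-saturating set in $\F_{q^r}^k$, then applies Remark~\ref{rem:saturating}(2) with $s=t$. Your union-of-spanning-sets justification of the remark is fine, and it already covers the case $tr>k$, so that separate side comment is harmless but unnecessary.
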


However, Corollary~\ref{cor:DGMP} can be improved, by providing a direct generalization of Theorem~\ref{thm:sbs-saturating}, which takes into account the fact that we want to saturate all subspaces of
dimension at most $t$.

\begin{theorem}\label{thm:rhosbs-rhpsaturating}
    Let $t,r$ be positive integers and assume that $tr \leq k$. Then any $tr$-strong blocking set in $\F_{q}^{k}\subseteq \F_{q^r}^k$ is a $(tr,t)$-saturating set in $\F_{q^r}^{k}$.
\end{theorem}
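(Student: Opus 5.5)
The plan is to unfold coordinates over $\F_q$, reduce to a statement about a single $\F_q$-subspace of dimension at most $tr$, and then apply the $tr$-strong blocking property there.

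Fix an $\F_q$-basis $\omega_1,\ldots,\omega_r$ of $\F_{q^r}$. Let $\mS\subseteq\F_q^k$ be a $tr$-strong blocking set in $\F_q^k$. Take a set $X=\{x_1,\ldots,x_t\}\subseteq\F_{q^r}^k$. We must find at most $tr$ elements of $\mS$ whose $\F_{q^r}$-span contains $X$.

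First, write each $x_i$ uniquely as
\[
x_i=\sum_{j=1}^{r}\omega_j\, y_{ij}, \qquad y_{ij}\in\F_q^k .
\]
Let $W:=\langle y_{ij} : i\in[t],\ j\in[r]\rangle_{\F_q}\subseteq\F_q^k$. This is an $\F_q$-subspace of dimension at most $tr$. Since $tr\le k$, we can enlarge $W$ to an $\F_q$-subspace $\Lambda\subseteq\F_q^k$ with $\dim_{\F_q}\Lambda=tr$.

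By the $tr$-strong blocking property, $\langle\mS\cap\Lambda\rangle_{\F_q}=\Lambda$. Hence we can extract a basis $h_1,\ldots,h_{tr}\in\mS\cap\Lambda$ of $\Lambda$ over $\F_q$. Each $y_{ij}$ is an $\F_q$-linear combination of the $h_\ell$. Therefore each $x_i$ is an $\F_{q^r}$-linear combination of $h_1,\ldots,h_{tr}$, because the $\omega_j$ serve as $\F_{q^r}$-coefficients.

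So $X\subseteq\langle h_1,\ldots,h_{tr}\rangle_{\F_{q^r}}$. This $\F_{q^r}$-subspace has dimension at most $tr$ and is spanned by vectors of $\mS$. Thus $X$ is $tr$-saturated by $\mS$, which is exactly Definition~\ref{def:rho_t_saturating}.

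The only point needing care is the role of the hypothesis $tr\le k$. It is used only to enlarge $W$ to a $tr$-dimensional space, so that the strong blocking property applies. If $\dim W<tr$ one could alternatively invoke the remark that a $tr$-strong blocking set is $s$-strong for $s\ge tr$, but enlarging $W$ is simpler.

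The main conceptual step is the coordinate unfolding. The $t$ vectors over $\F_{q^r}$ produce at most $tr$ vectors over $\F_q$, all lying in one $\F_q$-subspace. This is the direct analogue of the proof of Theorem~\ref{thm:sbs-saturating}, and it explains the gain over Corollary~\ref{cor:DGMP}: all $t$ vectors are handled simultaneously inside a single $tr$-space rather than separately.
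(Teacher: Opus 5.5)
Your proof is correct and shares the paper's skeleton: produce an $\F_q$-subspace of $\F_q^k$ of dimension at most $tr$ whose $\F_{q^r}$-span contains $X$, then apply the $tr$-strong blocking property. You differ in how that subspace is obtained.

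The paper forms $Y=\langle X,X^\sigma,\ldots,X^{\sigma^{r-1}}\rangle_{\F_{q^r}}$, with $\sigma$ the $q$-Frobenius. It observes that $Y$ is $\sigma$-stable of dimension at most $tr$, and uses Galois descent to get a basis of $Y$ inside $\F_q^k$. You instead expand each $x_i$ in an $\F_q$-basis $\omega_1,\ldots,\omega_r$ of $\F_{q^r}$ and take the $\F_q$-span $W$ of the coordinate vectors $y_{ij}$.

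The two spaces agree. Since $x_i^{\sigma^l}=\sum_j\omega_j^{q^l}y_{ij}$ and the Moore matrix $(\omega_j^{q^l})_{l,j}$ is invertible, $Y=\langle y_{ij}\rangle_{\F_{q^r}}$. So your $W$ is exactly the descended subspace.

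Your route is more elementary and self-contained, since no descent lemma is needed. The paper's route is coordinate-free and describes the subspace intrinsically, as the smallest Frobenius-stable one containing $X$. You also make explicit the enlargement to a $tr$-dimensional $\Lambda$. The paper leaves this implicit: its $U$ only has dimension $\kappa\le tr$ when the strong blocking property is invoked.

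One correction to your closing aside. The fact that a $tr$-strong blocking set is $s$-strong for $s\ge tr$ cannot replace the enlargement when $\dim W<tr$. That fact only goes upward in dimension, whereas here you would need $\mS$ to be $(\dim W)$-strong, which fails in general. So enlarging $W$ is not just simpler but necessary, and your main argument does exactly that.
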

\begin{proof}
    Let $X\subseteq \F_{q^r}^{k}$ be a set of size $t$. We first show that there exists a $tr$-dimensional subspace $U$ of $\F_{q}^{k}$ such that $X\subseteq \langle U\rangle_{\F_{q^{r}}}$. Let us define the space $Y=\langle X, X^\sigma,\ldots,X^{\sigma^{r-1}}\rangle_{\F_{q^r}}$, where $\sigma$ is the $q$-Frobenius automorphism. First, observe that $\dim_{\F_{q^r}}(Y)=\kappa \le tr$. Moreover, $Y$ is fixed by $\sigma$, and  therefore there exists a basis of $Y$ made by $\kappa$ elements in $\Fq^{k}$. Thus, we can take $U$ as the $\Fq$-space generated by those vectors, which is then an $\Fq$-subspace of $\Fq^{k}$ of $\Fq$-dimension $\kappa\le tr$. At this point, we use the fact that a $tr$-strong blocking set in $\Fq^k$ generates any $tr$-dimensional $\Fq$-subspace $U$ and we conclude. 
\end{proof}

\begin{remark}
    Let us compare the two constructions of $(tr, t)$-saturating sets obtained in Corollary~\ref{cor:DGMP} and in Theorem~\ref{thm:rhosbs-rhpsaturating}. Assume that we aim at constructing a $(\rho,t)$-saturating set in $\F_{q^{r}}^{k}$. Using Corollary~\ref{cor:DGMP} we need a $r$-strong blocking set in $\Fq^{k}$, while for Theorem~\ref{thm:rhosbs-rhpsaturating} we just need a $tr$-strong blocking set in $\Fq^{k}$. It is straightforward to see that any $r$-strong blocking set is also a $tr$-strong blocking set, for instance by using Remark~\ref{rem:saturating}(2) together with Proposition~\ref{prop:strong_(t,t)saturating}. Moreover, an $r$-strong blocking set is typically larger and has size at least $r q^{k-r}$, while there exist $tr$-strong blocking sets of size approximately $tr(k-tr+1)q^{k-tr}$; see e.g.~\cite{bishnoi2024blocking}. 
\end{remark}

\begin{remark}[Asymptotic tightness of the bound in Theorem~\ref{thm:lower_bound}]
In \cite{bishnoi2024blocking}, using a probabilistic argument, it was shown that there always exists a $t$-strong blocking set in $\Fq^k$ of size at most
$$\frac{q^{k-t+1}-1}{q-1}\cdot \frac{(k-t+1)t+2}{\log_q\left(\frac{q^4}{q^3-q+1}\right)}.$$
This argument is a generalization of those used in \cite{heger2021short,alfarano2024outer} for the case $t=k-1$. 
Thus, for fixed $k$ and $t$, the size of the smallest $t$-strong blocking set in $\Fq^k$ is $\mathcal O(q^{k-t})$, which coincides with the asymptotic of the bound in Remark~\ref{rem:lowerbound_strong_bishnoi} and Corollary~\ref{cor:lowerbound_strong}. 

Moreover, using the construction of Theorem~\ref{thm:rhosbs-rhpsaturating}, for suitable values of $\rho, t$ and $r$, we can get the existence of a $(\rho,t)$-saturating set in $\F_{q^r}^k$ whose size is optimal with respect to the bound of  Theorem~\ref{thm:lower_bound}. More precisely, assume $\rho$ and $t$ are non-zero integers with $t$ dividing $\rho$, and let $r=\frac{\rho}{t}$; then, from a $\rho$-strong blocking set in~$\F_q^{k}$ of size $\mathcal O(q^{k-\rho})$, we get a $(\rho,t)$-saturating set in $\F_{q^r}^{k}$ of size $\mathcal O(q^{k-\rho})=\mathcal O((q^{r})^{\frac{k-\rho}{r}})=\mathcal O((q^{r})^{t\frac{k-\rho}{\rho}})$, which indeed meets the lower bound of Theorem~\ref{thm:lower_bound}.

\end{remark}

\subsection{Projective and graph-based constructions}\label{subsec:graph_constr}

In this subsection, we give systematic constructions of $(k-1,t)$-saturating sets in the projective space $\PG(k-1,q)$, generalizing a well-known construction given by the generalized tetrahedron of order $k-t+1$. We use the projective language here because it is more natural in the framework of the proposed construction. In particular, a projective point set is identified with any choice of nonzero representatives in $\F_q^k$; the saturation property is invariant under rescaling of the representatives.

Recall that a \textbf{generalized tetrahedron of order $s$} in $\PG(k-1,q)$ is given as follows. Start with $P_1,\ldots,P_k$ points in general position, and define the set
$$\mathcal T_{s,k}=\bigcup_{\substack{I\subseteq [k] \\ |I|=s}}\langle P_i\,:\, i \in I \rangle.$$

It was shown in \cite{davydov2011linear} that the generalized tetrahedron $\mathcal T_{k-t+1,k}$ is  a $t$-strong blocking set, that is, a $(t,t)$-saturating set. In the following, we will adapt this construction for $(k-1,t)$-saturating sets, by only selecting some of the lines of $\mT_{2,k}$. While for $t=k-1$ it can be shown that we actually need all the $\binom{k}{2}$ lines that constitute $\mT_{2,k}$, for $t<k-1$ we can only select a subset of them. In particular, the procedure for selecting such subsets of lines will rely on the properties of a given graph $G$ on $k$ vertices. A similar idea was already used in \cite{alon2024strong} for constructing $(k-1)$-strong blocking sets, but using graphs on $n>k$ vertices.

Let $\mathrm{PG}(V)^\vee:=\mathrm{PG}(V^\vee)$ denote the
dual projective space, whose points correspond to the hyperplanes of
$\mathrm{PG}(V)$. We first specialize the Dual Grassmannian criterion of Proposition~\ref{prop:dual_Grassmannian} to the case $\rho=k-1$. This is particularly helpful in this case because the dual Grassmannian of hyperplanes is isomorphic to a projective space.

\begin{lemma}[A dual criterion for \texorpdfstring{$(k-1,t)$}{(k-1,t)}-saturation]\label{lem:dual_hyperplane_criterion}
Let $\mS\subseteq \PG(k-1,q)$, and let
\[
B(\mS):=
\left\{
\Sigma\in \PG(k-1,q)^\vee :
\left\langle \mS\cap \Sigma \right\rangle \neq \Sigma
\right\}
\]
be the set of hyperplanes which are not generated by their intersection with $\mS$.
Then, for $t\in[k-1]$, the following are equivalent:
\begin{enumerate}
    \item[(1)] $\mS$ is a $(k-1,t)$-saturating set;
    \item[(2)] $B(\mS)$ contains no projective $(k-t-1)$-subspace of
    $\PG(k-1,q)^\vee$.
\end{enumerate}
\end{lemma}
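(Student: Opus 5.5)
The plan is to specialize the Dual Grassmannian criterion (Proposition~\ref{prop:dual_Grassmannian}) to $\rho=k-1$ and translate vector-space dimensions into projective ones.

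With $\rho=k-1$, a subspace $M\subseteq V^\vee$ of dimension $k-\rho=1$ is a point of $\PG(V^\vee)=\PG(k-1,q)^\vee$. Its annihilator $M^\perp$ is a hyperplane $\Sigma$ of $V$, and conversely every hyperplane arises this way. Under this identification, the condition $M\in\mathcal B_{k-1}(\mS)$, i.e. $M^\perp\neq\langle \mS\cap M^\perp\rangle$, says exactly that $\Sigma\in B(\mS)$. Here I will check that the projective reading of $\mS$ is harmless: the saturation property and the span condition are invariant under rescaling representatives, as remarked at the beginning of the subsection. Thus $\mathcal B_{k-1}(\mS)$ and $B(\mS)$ are the same point set of the dual projective space.

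Next, a vector subspace $L\subseteq V^\vee$ of dimension $k-t$ is a projective $(k-t-1)$-subspace of $\PG(k-1,q)^\vee$. The hypothesis $t\in[k-1]$ gives $k-t\ge 1$, so these are nonempty projective subspaces, and the range condition $1\le t\le\rho=k-1\le k$ of Proposition~\ref{prop:dual_Grassmannian} is satisfied. A $1$-dimensional subspace $M\subseteq L$ is a point of this projective subspace. Proposition~\ref{prop:dual_Grassmannian} then says that $\mS$ is $(k-1,t)$-saturating if and only if every projective $(k-t-1)$-subspace of the dual space contains a point not in $B(\mS)$. This is the same as saying that no such subspace is entirely contained in $B(\mS)$, which is statement~(2).

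There is no real obstacle here. The only care needed is bookkeeping: matching vector dimensions with projective ones, and noting that ``contains no projective subspace'' means ``no projective subspace is contained in $B(\mS)$''. One could also argue directly: a $t$-subspace $T$ lies in a good hyperplane if and only if some point of the projective space $\PG(T^\perp)$ is good. That is the same argument in different words.
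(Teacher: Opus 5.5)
Your proposal is correct and takes essentially the same route as the paper: the paper's proof is this specialization of the dual Grassmannian criterion to $\rho=k-1$, written out directly in projective terms. There, for each projective $(t-1)$-subspace $\Lambda$, the hyperplanes through $\Lambda$ form the $(k-t-1)$-subspace $\Lambda^\perp$ of $\PG(k-1,q)^\vee$, and saturation means $\Lambda^\perp\not\subseteq B(\mS)$, which is the direct version you mention at the end. Your checks that $\mathcal B_{k-1}(\mS)$ and $B(\mS)$ are the same point set of the dual space, and that $t\in[k-1]$ satisfies $1\le t\le\rho\le k$, are all the bookkeeping the argument needs.
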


\begin{proof}
We recall that projectively $\mS$ is $(k-1,t)$-saturating if and only if every
projective $(t-1)$-subspace $\Lambda\subseteq \PG(k-1,q)$
is contained in a projective $(k-2)$-subspace generated by points of
$\mS$. Equivalently, for every such $\Lambda$, there exists a hyperplane
$\Sigma$ of $\PG(k-1,q)$ such that $\Lambda\subseteq \Sigma$ and $\left\langle \mS\cap \Sigma \right\rangle=\Sigma$.

Fix now a projective $(t-1)$-subspace $\Lambda$. The set of all
hyperplanes containing $\Lambda$ is a projective subspace of the dual
space $\PG(k-1,q)^\vee$, namely
\[
\Lambda^\perp :=
\left\{
\Sigma\in \PG(k-1,q)^\vee : \Lambda\subseteq \Sigma
\right\}.
\]
Its dimension is
\[
\dim(\Lambda^\perp) = (k-1)-(t-1)-1=k-t-1.
\]

Therefore, the $(k-1,t)$-saturating property is equivalent to requiring
that, for every projective $(t-1)$-subspace $\Lambda$,
the dual flat $\Lambda^\perp$ contains at least one hyperplane
$\Sigma$ such that $\left\langle \mS\cap \Sigma \right\rangle=\Sigma$.
By definition of $B(\mS)$, this is equivalent to $\Lambda^\perp \not\subseteq B(\mS)$, for every projective $(t-1)$-subspace $\Lambda$.

Finally, as $\Lambda$ varies among the projective $(t-1)$-subspaces of
$\PG(k-1,q)$, the subspaces $\Lambda^\perp$ are precisely the projective
$(k-t-1)$-subspaces of the dual projective space. Hence the previous
condition is equivalent to saying that $B(\mS)$ contains no projective
$(k-t-1)$-subspace of $\PG(k-1,q)^\vee$.
\end{proof}

We now specialize Lemma~\ref{lem:dual_hyperplane_criterion} to a family of
sets arising from graphs.

Let $P_1,\dots,P_k$ be points in general position in $\PG(k-1,q)$, and let
$G$ be a graph on the vertex set~$[k]$. We associate with $G$ the
point set
\[
\mS_G :=
\bigcup_{\{i,j\}\in E(G)}
\langle P_i,P_j\rangle
\subseteq \PG(k-1,q),
\]
where $E(G)$ denotes the edge set of $G$. It can be easily seen that with this notation, when $G$ is the complete graph $K_k$ on $k$ vertices, we recover the generalized tetrahedron $\mT_{2,k}$ of order $2$. Throughout this paragraph we assume
that $G$ has no isolated vertices. This ensures that every point $P_i$
belongs to $\mS_G$.
Choose coordinates so that $P_i=[e_i]$, where $e_1,\dots,e_k$ is the standard basis of $\F_q^k$. For a point $[c]=[c_1:\cdots:c_k]\in \PG(k-1,q)^\vee$, we define
\[
\supp(c):=\{i\in [k]: c_i\neq 0\}.
\]
We also define the set of \textbf{graph-bad hyperplanes} by
\[
B_G :=
\left\{
[c] \in \PG(k-1,q)^\vee :
G[\supp(c)] \text{ is disconnected}
\right\},
\]
where $G[\supp(c)]$ denotes the subgraph of $G$ induced by the vertices in
$\supp(c)$.

\begin{corollary}[Graph form of the dual criterion]\label{cor:graph_dual}

With the notation above, the set $\mS_G$ is a $(k-1,t)$-saturating set if and only if $B_G$ contains no projective $(k-t-1)$-subspace of $\PG(k-1,q)^\vee$.
\end{corollary}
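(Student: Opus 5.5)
By Lemma~\ref{lem:dual_hyperplane_criterion}, it suffices to prove that $B(\mS_G)=B_G$. In other words, a hyperplane $\Sigma=[c]$ (the kernel of the functional $x\mapsto \sum_i c_ix_i$) is generated by $\mS_G\cap\Sigma$ if and only if $G[\supp(c)]$ is connected. Once this set equality holds, the corollary follows verbatim from the lemma.

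\emph{Step 1: points of $\mS_G$ in $\Sigma$.} Write $J=\supp(c)$ and $m=|J|$. For a vertex $i\notin J$, the point $P_i$ lies in $\Sigma$, and $P_i\in\mS_G$ because $G$ has no isolated vertices. For an edge $\{i,j\}$, we compute which points of the line $\langle P_i,P_j\rangle$ lie in $\Sigma$.
\begin{itemize}
\item If $i,j\notin J$, the whole line lies in $\Sigma$.
\item If exactly one endpoint, say $i$, is in $J$, the only point of the line in $\Sigma$ is $P_j$.
\item If $i,j\in J$, the only point is $[c_je_i-c_ie_j]$.
\end{itemize}
Hence $\langle\mS_G\cap\Sigma\rangle=\langle e_l : l\notin J\rangle+W$, where $W=\langle c_je_i-c_ie_j : \{i,j\}\in E(G[J])\rangle$.

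\emph{Step 2: dimension count.} The space $\Sigma$ has dimension $k-1$, and $\langle e_l: l\notin J\rangle$ has dimension $k-m$. So $\Sigma$ is generated exactly when $\dim W=m-1$. Rescale coordinates by $y_i=c_ix_i$ for $i\in J$. Then $W$ becomes the span of the vectors $e_i-e_j$ over the edges of $G[J]$, which is the image of the incidence (boundary) map of that graph. This space has dimension $m-(\text{number of connected components of } G[J])$. Therefore $\dim W=m-1$ holds if and only if $G[J]$ is connected. This shows $[c]\notin B(\mS_G)$ if and only if $[c]\notin B_G$.

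The only delicate point is the case $m=1$. There $\Sigma=\langle e_l: l\ne i\rangle$ is spanned by coordinate points, and $G[\{i\}]$ is a single vertex, hence connected; this agrees with $B_G$ as defined. The rest is the standard fact that the cycle-space rank of a graph equals the number of vertices minus the number of components, which I would quote or check via a spanning forest.
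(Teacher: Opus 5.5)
Your proof is correct and is essentially the paper's argument. You reduce via Lemma~\ref{lem:dual_hyperplane_criterion} to $B(\mS_G)=B_G$, identify the points $[c_je_i-c_ie_j]$ and the coordinate points $P_h$ ($h\notin\supp(c)$) of $\mS_G$ in $\Sigma_c$, and then decide spanning; your full case analysis of the edges and your single rank count for the rescaled vectors $e_i-e_j$ make explicit the paper's converse direction, which only sketches why the remaining points contribute nothing new. The only slip is terminological: $|J|$ minus the number of components of $G[J]$ is the rank of the incidence matrix (the dimension of the cut space), not of the cycle space, whose dimension is $|E(G[J])|-|J|$ plus the number of components; the fact you actually use in Step~2 is the correct one.
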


\begin{proof}
By Lemma~\ref{lem:dual_hyperplane_criterion}, it is enough to identify the bad hyperplanes for the set $\mS_G$. 
For any $c\in \Fq^k\setminus\{0\}$  let
\[
\Sigma_c=\{ [x]\in\PG(k-1,q)\,:\,
c_1x_1+\cdots+c_kx_k=0\}
\]
 be a hyperplane of $\PG(k-1,q)$, and set
\[
I:=\supp(c).
\]
We first show that
$\left\langle \mS_G\cap \Sigma_c\right\rangle=\Sigma_c$ if and only if $G[I]$ is connected. Indeed, for every edge $\{i,j\}\in E(G[I])$, the line
$\langle P_i,P_j\rangle$ meets $\Sigma_c$ in the point $[c_j e_i-c_i e_j]$.
If $G[I]$ is connected, these points span the hyperplane
\[
\left\{
\sum_{i\in I} x_i e_i :
\sum_{i\in I} c_i x_i=0
\right\}
\]
inside $\langle e_i : i\in I\rangle$. Moreover, since $G$ has no
isolated vertices, every point $P_h$ with $h\notin I$ belongs to
$\mS_G$, and clearly $P_h\in \Sigma_c$. Hence
\[
\left\langle \mS_G\cap \Sigma_c\right\rangle=\Sigma_{c}.
\]

Conversely, suppose that $G[I]$ is disconnected, and let
$I=I_1\sqcup I_2$ be a nontrivial partition such that there are no edges of
$G[I]$ between $I_1$ and $I_2$. The points of $\mS_G\cap\Sigma_c$ whose
support is contained in $I$ arise only from edges contained in $I_1$ or in
$I_2$. Hence their span inside $\langle P_i:i\in I\rangle$ is contained in
\[
\left\{
\sum_{i\in I}x_ie_i:
\sum_{i\in I_1}c_ix_i=0,\ 
\sum_{i\in I_2}c_ix_i=0
\right\},
\]
which is a proper subspace of
\[
\left\{
\sum_{i\in I}x_ie_i:
\sum_{i\in I}c_ix_i=0
\right\}.
\]
Adding the coordinate points $P_h$, with $h\notin I$, only contributes the
coordinate directions outside $\langle P_i:i\in I\rangle$. Therefore
\[
\langle \mS_G\cap\Sigma_c\rangle\neq \Sigma_c.
\]
Hence, the set of bad hyperplanes $B(\mS_G)$ coincides with $B_G$.
The result now follows immediately from Lemma~\ref{lem:dual_hyperplane_criterion} applied to
$\mS=\mS_G$.
\end{proof}

We now use Corollary~\ref{cor:graph_dual} to give a more explicit construction of $(k-1,t)$-saturating sets in $\PG(k-1,q)$. To this aim we remind here that the \textbf{vertex-connectivity}, $\kappa(G)$, of $G$ is the minimum number of vertices that must be removed from the set $[k]$ to disconnect $G$ or to reduce it to a single isolated vertex.

\begin{theorem}\label{thm:graph_constr}
Let $t\in [k-1]$, and let $G$ be a graph on $[k]$ with $\kappa(G)\ge t$. Let $P_1,\ldots,P_k$ be points in general position in $\PG(k-1,q)$, and let
\[
\mS_G=\bigcup_{\{i,j\}\in E(G)}\langle P_i,P_j\rangle .
\]
Then $\mS_G$ is a $(k-1,t)$-saturating set in $\PG(k-1,q)$.
\end{theorem}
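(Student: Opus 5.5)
The plan is to use Corollary~\ref{cor:graph_dual}. Projective $(k-t-1)$-subspaces of $\PG(k-1,q)^\vee$ are the $(k-t)$-dimensional subspaces $L\subseteq\F_q^k$. So it suffices to show that every such $L$ contains a nonzero vector $c$ with $G[\supp(c)]$ connected. A one-vertex induced subgraph counts as connected: then $\Sigma_c$ is a coordinate hyperplane, which is spanned by the points $P_h$ with $h\neq i$.

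The first step is a structural observation about bad vectors. Suppose $c\neq 0$ and $G[I]$ is disconnected, where $I=\supp(c)$. Then $C:=[k]\setminus I$ is a vertex cut of $G$, so $\kappa(G)\ge t$ forces $|C|\ge t$, i.e.\ $|I|\le k-t$. More precisely, let $I_1$ be a connected component of $G[I]$ and let $N(I_1)$ be its neighbourhood outside $I_1$. Then $N(I_1)\subseteq C$ separates $I_1$ from the rest of $I$, so $|N(I_1)|\ge t$.

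I would argue by contradiction: assume every nonzero vector of $L$ is graph-bad. Among all nonzero $c\in L$ and components $I_1$ of $G[\supp(c)]$, choose a pair with $|I_1|$ maximal. Put $N=N(I_1)$ and $R=[k]\setminus(I_1\cup N)$; note $R\neq\emptyset$, since it contains the other components of $\supp(c)$. Now count dimensions. The subspace $L_0=\{x\in L: x_j=0 \text{ for all } j\in R\}$ has dimension at least $k-t-|R|$. Since $|I_1|+|N|+|R|=k$ and $|N|\ge t$, this gives $\dim L_0\ge |I_1|+(|N|-t)\ge |I_1|$. Two cases follow.
\begin{enumerate}
\item[(a)] If every vector of $L_0$ vanishes on $N$, then $L_0$ embeds into $\F_q^{I_1}$. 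Comparing dimensions, $L_0$ is essentially all of $\F_q^{I_1}$ when $|N|=t$. So $L_0$ contains a unit vector $e_i$ with $i\in I_1$, whose support is a single vertex and hence connected, a contradiction. If $|N|>t$, there is strict dimensional room, which already forces a vector nonzero on $N$; that is case (b).
\item[(b)] Otherwise pick $x\in L_0$ with $x_v\neq 0$ for some $v\in N$, where $v$ is adjacent to $I_1$. I would try to combine $x$ with vectors of $L_0$ supported on $I_1$ (which the dimension count provides) to produce a vector whose support contains a connected set strictly larger than $I_1$, namely $I_1$ together with $v$ and possibly more vertices. This contradicts the maximality of $|I_1|$.
\end{enumerate}

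The main obstacle is case (b) over small fields. Taking a generic combination $c+\lambda x$ to avoid cancellations on $I_1$ only works when $q$ is large compared to $|I_1|$. To avoid any dependence on $q$, I would instead run the dimension count inside $L_0$ more carefully. Since $\dim L_0\ge |I_1|$ while the coordinates available are $I_1\cup N$, restricting $L_0$ to $I_1$ either is injective, which yields the needed freedom, or has a nontrivial kernel, whose vectors are supported in $N$ alone. In the second case I would apply the same maximality argument again to a component of that support, ordering pairs lexicographically so that the process terminates. The fallback, if this bookkeeping fails, is an induction on $k$. Delete a vertex $v$, observe that $\kappa(G-v)\ge t-1$, intersect $L$ with the hyperplane $x_v=0$, which gives dimension $\ge k-t-1=(k-1)-(t-1)-1+1$, and apply the statement to $G-v$ with parameter $t-1$. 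Here $v$ must be chosen so that connectivity of the resulting support is preserved in $G$, and this is exactly where $\kappa(G)\ge t$ would be used.
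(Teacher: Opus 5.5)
Your reduction to Corollary~\ref{cor:graph_dual}, the bound $|N(I_1)|\ge t$, and case (a) are all correct. The gap is case (b), which is the core of the argument: it is not carried out, and as sketched it does not work.

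\begin{itemize}
\item The ``vectors of $L_0$ supported on $I_1$'' are not guaranteed by the dimension count. You only get $\dim(L_0\cap\F_q^{I_1})\ge |I_1|-t$, which can be $0$.
\item More seriously, a vector $x\in L_0$ with $x_v\neq 0$ for some $v\in N$ may vanish on most of $I_1$. Then the component of $G[\supp x]$ through $v$ need not be larger than $I_1$, and maximality gives no contradiction. Forcing full support on $I_1$ by a generic combination needs $q$ large, as you note.
\item Your repair for the kernel branch (re-running maximality on components of supports inside $N$, ordered lexicographically) is not specified. Those components have size at most $|I_1|$ anyway, so they give nothing new.
\item The fallback induction does not close. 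Intersecting $L$ with $x_v=0$ gives dimension $\ge k-t-1$, but the instance $(G-v,\,t-1)$ on $k-1$ vertices needs dimension $(k-1)-(t-1)=k-t$. The identity $k-t-1=(k-1)-(t-1)-1+1$ you wrote is false.
\end{itemize}

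The missing idea is to keep $c$, which already has full support on $I_1$, and perturb it. Passing to $L_0$ discards $c$, since $c$ is nonzero on $R$.

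\begin{itemize}
\item Let $K=\{w\in L: w_i=0 \text{ for all } i\in I_1\}$, so $\dim K\ge k-t-|I_1|$.
\item For $w\in K$, the vector $c+w$ agrees with $c$ on $I_1$. If it were nonzero at some $v\in N$, the component of $G[\supp(c+w)]$ containing $I_1$ would also contain $v$, contradicting maximality. Since $c$ vanishes on $N$, so does $w$.
\item Hence $K\subseteq \F_q^{R}$. But $|R|=k-|I_1|-|N|\le k-t-|I_1|\le \dim K$, so $K=\F_q^{R}$.
\item Since $R\neq\emptyset$, $L$ contains a unit vector $e_j$, whose support is a single vertex and hence connected. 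This is a contradiction.
\end{itemize}

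Your own dichotomy also closes this way. In the injective branch, the preimage of an all-nonzero vector on $I_1$ has support $I_1\cup S$ with $S\subseteq N$, and this set is connected. In the kernel branch, add the kernel vector to $c$. No genericity is used, so the argument works for every $q$.

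The paper takes a shorter route. From your first observation, every $w\in L$ has weight at most $k-t=\dim L$. The paper then invokes the classification of optimal linear anticodes to conclude that $L$ is a coordinate subspace and so contains some $e_i$. That classification needs $q\ge 3$: over $\F_2$ the code $\{000,110,011,101\}$ has dimension $2$ and maximum weight $2$ but is not a coordinate subspace. So the completed maximal-component argument is the more robust of the two.
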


\begin{proof}
By Corollary~\ref{cor:graph_dual}, it is enough to show that $B_G$ contains no projective $(k-t-1)$-subspace of $\PG(k-1,q)^\vee$.
Assume, by contradiction, that $B_G$ contains a projective
$(k-t-1)$-subspace, and let $W\subseteq \mathbb F_q^k$ be the corresponding vector
subspace. Then $\dim W=k-t$
and, for every nonzero $w\in W$, the graph
$G[\supp(w)]$ is disconnected.
Since $\kappa(G)\ge t$, whenever $G[I]$ is disconnected, the complement
$V(G)\setminus I$ has size at least $t$. Hence, $|\supp(w)|\le k-t$ for every nonzero $w\in W$.
Thus $W$ is a linear subspace of dimension $k-t$ whose vectors all have Hamming weight at most $k-t$. By the classification of maximal linear anticodes in the Hamming metric (see e.g. \cite{ravagnani2016generalized}), $W$ is a coordinate subspace. Hence $W$ contains a coordinate vector $e_i$.
But $G[\supp(e_i)]=G[\{i\}]$ is connected, so $[e_i]\notin B_G$, contradicting $\PG(W)\subseteq B_G$.
Therefore $B_G$ contains no projective $(k-t-1)$-subspace. By
Corollary~\ref{cor:graph_dual}, $\mS_G$ is a $(k-1,t)$-saturating set.
\end{proof}

A concrete family of graphs satisfying the hypothesis of Theorem~\ref{thm:graph_constr} is obtained by deleting complete graphs on pairwise disjoint vertex sets from $K_k$.

\begin{corollary}\label{cor:general_graph}
Let $P_1,\ldots,P_k$ be $k$ points in general position in $\PG(k-1,q)$. Let $A_1,\ldots,A_s\subseteq [k]$ be
pairwise disjoint subsets with $|A_a|\le k-t$ for every $a \in [s]$. Let $G$ be the graph obtained from $K_k$ by deleting, for every $a$, all edges of the complete graph on $A_a$, and define
\[
\mS_G=
\bigcup_{\{i,j\}\in E(G)}
\langle P_i,P_j\rangle .
\]
Then $\mS_G$ is a $(k-1,t)$-saturating set.
\end{corollary}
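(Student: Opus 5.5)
The plan is to deduce the statement directly from Theorem~\ref{thm:graph_constr}. It suffices to show that the graph $G$ obtained from $K_k$ by deleting the cliques on the disjoint sets $A_1,\ldots,A_s$ satisfies $\kappa(G)\ge t$. Once this is established, Theorem~\ref{thm:graph_constr} yields at once that $\mS_G$ is $(k-1,t)$-saturating.

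The structure of $G$ is explicit: it is the complete multipartite graph whose parts are the sets $A_1,\ldots,A_s$ together with the singletons $\{i\}$ for $i\notin\bigcup_a A_a$. Two vertices are adjacent exactly when they lie in different parts. I will use the definition of vertex-connectivity recalled before Theorem~\ref{thm:graph_constr}. Let $R\subseteq[k]$ be a set of removed vertices such that $G-R$ is disconnected or consists of a single vertex; I need to show $|R|\ge t$.

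\textbf{Case 1: $G-R$ is disconnected.} The remaining vertices must all lie in a single part. Indeed, if two remaining vertices lie in different parts, they are adjacent, and every other remaining vertex is adjacent to at least one of them, since it cannot lie in both of their parts. So $G-R$ would be connected. Hence $[k]\setminus R\subseteq A_a$ for some $a$, because a singleton part gives only one vertex and not a disconnected graph. Then $|R|\ge k-|A_a|\ge t$.

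\textbf{Case 2: $G-R$ is a single vertex.} Then $|R|=k-1\ge t$, since $t\in[k-1]$.

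In both cases $\kappa(G)\ge t$. I also need that $G$ has no isolated vertices, which is assumed throughout the graph construction. A vertex $i\in A_a$ is adjacent to everything outside $A_a$, and $[k]\setminus A_a$ is nonempty because $|A_a|\le k-t<k$. A vertex outside all the $A_a$ is adjacent to every other vertex, and $k\ge 2$ because $t\ge1$ and $t\le k-1$.

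The main point requiring care is the convention on $\kappa$ in degenerate situations, in particular when the whole graph is complete or when $G-R$ becomes a single vertex. I handle these through the "reduce to a single isolated vertex" clause, which costs $k-1\ge t$ removals. Otherwise the argument is a routine check of the multipartite structure.
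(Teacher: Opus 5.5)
Your proposal is correct and follows the paper's route: you show $\kappa(G)\ge t$ by observing that removing vertices so as to disconnect $G$ must leave all surviving vertices inside a single $A_a$, and you then invoke Theorem~\ref{thm:graph_constr}. Your version is somewhat more careful than the paper's terse argument. You justify why two survivors in different parts make $G-R$ connected (every other survivor is adjacent to one of them), you handle the single-vertex case of $\kappa$ separately, and you check explicitly that $G$ has no isolated vertices.
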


\begin{proof}
Let
\[
r_{\max}:=\max_{a\in [s]} |A_a|.
\]
The graph $G$ is obtained from $K_k$ by deleting only the edges inside the
sets $A_a$. Hence all edges between distinct sets $A_a,A_b$, and all edges
from $A_a$ to its complement, are still present.
It follows that a vertex cut disconnecting $G$ must contain all vertices outside some $A_a$. Indeed, if after removing a set $X$ there remain vertices in two different sets $A_a,A_b$, or one vertex in some $A_a$ and one outside $A_a$, then these vertices are adjacent in $G$. Therefore
\[
\kappa(G)=k-r_{\max}.
\]
Since $r_{\max}\le k-t$, we have
\[
\kappa(G)\ge t.
\]
The conclusion follows from Theorem~\ref{thm:graph_constr}.
\end{proof}

We illustrate the construction in $\PG(3,q)$ and $\PG(5,q)$ in the following examples. The case of $\PG(4,q)$ is analogous to the first example: one starts from five points in general position and removes the lines determined by a pairwise disjoint family of subsets of size at most $2$.

\begin{example}[The tetrahedron in $\PG(3,q)$]
Let $P_1,P_2,P_3,P_4$ be four points in general position in $\PG(3,q)$.
They determine the tetrahedron whose edges are the six projective lines
$\langle P_i,P_j\rangle$, with $1\leq i<j\leq 4$.

Consider the two disjoint subsets
\[
A_1=\{1,2\},
\qquad
A_2=\{3,4\}.
\]
We remove the two lines determined by these subsets, namely
$\langle P_1,P_2\rangle$ and $\langle P_3,P_4\rangle$, and keep the four
remaining lines. Thus we define
\[
\mS
=
\langle P_1,P_3\rangle
\cup
\langle P_1,P_4\rangle
\cup
\langle P_2,P_3\rangle
\cup
\langle P_2,P_4\rangle .
\]
Then $\mS$ is a $(3,2)$-saturating set in $\PG(3,q)$.
Indeed, this is the case $k=4$ and $t=2$ of
Corollary~\ref{cor:general_graph}: here the deleted complete graphs are
supported on pairwise disjoint subsets of size at most $k-t=2$.
More explicitly, every line of $\PG(3,q)$ is contained in a plane generated
by the points of $\mS$ lying in that plane. Hence, even after removing two
opposite lines from the tetrahedron, the remaining four lines are still
enough to obtain a $(3,2)$-saturating set; see Figure~\ref{fig:tetrahedron-opposite-lines}.

\begin{figure}[ht]
\centering
\begin{tikzpicture}[scale=1.15, line cap=round, line join=round]

\coordinate (P1) at (0,2.5);
\coordinate (P2) at (-2.2,0);
\coordinate (P3) at (2.2,0);
\coordinate (P4) at (0,-1.7);

\fill[gray!10] (P1) -- (P2) -- (P4) -- cycle;
\fill[gray!7]  (P1) -- (P3) -- (P4) -- cycle;

\draw[red, very thick, dashed] (P1) -- (P2);
\draw[red, very thick, dashed] (P3) -- (P4);

\draw[teal, very thick] (P1) -- (P3);
\draw[teal, very thick] (P1) -- (P4);
\draw[teal, very thick] (P2) -- (P3);
\draw[teal, very thick] (P2) -- (P4);

\fill (P1) circle (2.2pt);
\fill (P2) circle (2.2pt);
\fill (P3) circle (2.2pt);
\fill (P4) circle (2.2pt);

\node[above] at (P1) {$P_1$};
\node[left] at (P2) {$P_2$};
\node[right] at (P3) {$P_3$};
\node[below] at (P4) {$P_4$};

\node[red, above left] at (-1.05,1.25) {$\langle P_1,P_2\rangle$};
\node[red, below right] at (1.05,-0.85) {$\langle P_3,P_4\rangle$};

\begin{scope}[shift={(2.9,1.9)}]
  \draw[teal, very thick] (0,0) -- (0.6,0);
  \node[right] at (0.7,0) {kept lines};

  \draw[red, very thick, dashed] (0,-0.45) -- (0.6,-0.45);
  \node[right] at (0.7,-0.45) {removed opposite lines};
\end{scope}

\end{tikzpicture}
\caption{A tetrahedron in $\PG(3,q)$. The two dashed red lines are removed, while the four colored lines form a $(3,2)$-saturating set.}
\label{fig:tetrahedron-opposite-lines}
\end{figure}
\end{example}

\begin{example}[The tetrahedron in $\PG(5,q)$]
Let $P_1,\ldots,P_6$ be six points in general position in $\PG(5,q)$.
They determine the projective simplex whose edges are the lines
$\langle P_i,P_j\rangle$, with $1\leq i<j\leq 6$.
We illustrate two different applications of Corollary~\ref{cor:general_graph}.

First, consider the three disjoint subsets
\[
A_1=\{1,2\},
\qquad
A_2=\{3,4\},
\qquad
A_3=\{5,6\}.
\]
Remove the three pairwise disjoint lines
\[
\langle P_1,P_2\rangle,\qquad
\langle P_3,P_4\rangle,\qquad
\langle P_5,P_6\rangle,
\]
and keep all the remaining lines. Thus
\[
\mS_1
=
\bigcup_{\substack{1\leq i<j\leq 6\\
\{i,j\}\not\subseteq A_a\ \text{for every }a}}
\langle P_i,P_j\rangle .
\]
Since this corresponds to removing complete graphs supported on disjoint
subsets of size $2$, the case $k=6$ and $t=4$ of
Corollary~\ref{cor:general_graph} shows that $\mS_1$ is a $(5,4)$-saturating set in $\PG(5,q)$.

Moreover, the same set $\mS_1$ is also $(5,3)$-saturating. Indeed, for
$k=6$ and $t=3$, the deleted complete graphs are still supported on subsets
of size at most $k-t=3$. Therefore Corollary~\ref{cor:general_graph} also
applies. Equivalently, this follows from monotonicity: every $(5,4)$-saturating set is automatically $(5,3)$-saturating.

Second, consider the two disjoint triples
\[
B_1=\{1,2,3\},
\qquad
B_2=\{4,5,6\}.
\]
Remove all the lines joining pairs of points inside $B_1$ and inside $B_2$,
namely
\[
\langle P_1,P_2\rangle,\quad
\langle P_1,P_3\rangle,\quad
\langle P_2,P_3\rangle
\]
and
\[
\langle P_4,P_5\rangle,\quad
\langle P_4,P_6\rangle,\quad
\langle P_5,P_6\rangle .
\]
Keep all the remaining lines, and define
\[
\mS_2
=
\bigcup_{\substack{1\leq i<j\leq 6\\
\{i,j\}\not\subseteq B_1,\ \{i,j\}\not\subseteq B_2}}
\langle P_i,P_j\rangle .
\]
Equivalently,
\[
\mS_2
=
\bigcup_{\substack{i\in B_1\\ j\in B_2}}
\langle P_i,P_j\rangle .
\]
Thus, $\mS_2$ is the union of the nine lines joining a point of
$\{P_1,P_2,P_3\}$ to a point of $\{P_4,P_5,P_6\}$.
Since this corresponds to removing complete graphs supported on two disjoint
subsets of size $3$, the case $k=6$ and $t=3$ of
Corollary~\ref{cor:general_graph} shows that $\mS_2$ is a
$(5,3)$-saturating set in $\PG(5,q)$.
Geometrically, the $(5,4)$-saturating property means that every projective
$3$-space of $\PG(5,q)$ is contained in a hyperplane generated by the points
of the set lying in that hyperplane. The $(5,3)$-saturating property means
that every projective plane of $\PG(5,q)$ is contained in such a hyperplane.
Thus, in $\PG(5,q)$, removing three disjoint lines gives a configuration
which is both $(5,4)$- and $(5,3)$-saturating, while removing two disjoint
triangles gives a more economical configuration tailored to the
$(5,3)$-saturating property; see Figure~\ref{fig:pg5-clique-removal}.

\begin{figure}[ht]
\centering

\begin{minipage}{0.49\textwidth}
\centering
\begin{tikzpicture}[scale=0.72, line cap=round, line join=round]

\coordinate (P1) at (0,2.8);
\coordinate (P2) at (-2.5,1.0);
\coordinate (P3) at (2.5,1.0);
\coordinate (P4) at (-2.0,-1.5);
\coordinate (P5) at (2.0,-1.5);
\coordinate (P6) at (0,-2.8);

\fill[gray!8] (P1) -- (P2) -- (P4) -- cycle;
\fill[gray!6] (P1) -- (P3) -- (P5) -- cycle;
\fill[gray!5] (P2) -- (P3) -- (P6) -- cycle;

\draw[teal, very thick] (P1) -- (P3);
\draw[teal, very thick] (P1) -- (P4);
\draw[teal, very thick] (P1) -- (P5);
\draw[teal, very thick] (P1) -- (P6);

\draw[teal, very thick] (P2) -- (P3);
\draw[teal, very thick] (P2) -- (P4);
\draw[teal, very thick] (P2) -- (P5);
\draw[teal, very thick] (P2) -- (P6);

\draw[teal, very thick] (P3) -- (P5);
\draw[teal, very thick] (P3) -- (P6);

\draw[teal, very thick] (P4) -- (P5);
\draw[teal, very thick] (P4) -- (P6);

\draw[red, very thick, dashed] (P1) -- (P2);
\draw[red, very thick, dashed] (P3) -- (P4);
\draw[red, very thick, dashed] (P5) -- (P6);

\fill (P1) circle (2.2pt);
\fill (P2) circle (2.2pt);
\fill (P3) circle (2.2pt);
\fill (P4) circle (2.2pt);
\fill (P5) circle (2.2pt);
\fill (P6) circle (2.2pt);

\node[above] at (P1) {$P_1$};
\node[left] at (P2) {$P_2$};
\node[right] at (P3) {$P_3$};
\node[left] at (P4) {$P_4$};
\node[right] at (P5) {$P_5$};
\node[below] at (P6) {$P_6$};

\node at (0,-3.65) {\small removing three disjoint lines};

\end{tikzpicture}
\end{minipage}
\hfill
\begin{minipage}{0.49\textwidth}
\centering
\begin{tikzpicture}[scale=0.72, line cap=round, line join=round]

\coordinate (P1) at (0,2.8);
\coordinate (P2) at (-2.5,1.0);
\coordinate (P3) at (2.5,1.0);
\coordinate (P4) at (-2.0,-1.5);
\coordinate (P5) at (2.0,-1.5);
\coordinate (P6) at (0,-2.8);

\fill[gray!8] (P1) -- (P2) -- (P4) -- cycle;
\fill[gray!6] (P1) -- (P3) -- (P5) -- cycle;
\fill[gray!5] (P2) -- (P3) -- (P6) -- cycle;

\draw[teal, very thick] (P1) -- (P4);
\draw[teal, very thick] (P1) -- (P5);
\draw[teal, very thick] (P1) -- (P6);

\draw[teal, very thick] (P2) -- (P4);
\draw[teal, very thick] (P2) -- (P5);
\draw[teal, very thick] (P2) -- (P6);

\draw[teal, very thick] (P3) -- (P4);
\draw[teal, very thick] (P3) -- (P5);
\draw[teal, very thick] (P3) -- (P6);

\draw[red, very thick, dashed] (P1) -- (P2);
\draw[red, very thick, dashed] (P1) -- (P3);
\draw[red, very thick, dashed] (P2) -- (P3);

\draw[red, very thick, dashed] (P4) -- (P5);
\draw[red, very thick, dashed] (P4) -- (P6);
\draw[red, very thick, dashed] (P5) -- (P6);

\fill (P1) circle (2.2pt);
\fill (P2) circle (2.2pt);
\fill (P3) circle (2.2pt);
\fill (P4) circle (2.2pt);
\fill (P5) circle (2.2pt);
\fill (P6) circle (2.2pt);

\node[above] at (P1) {$P_1$};
\node[left] at (P2) {$P_2$};
\node[right] at (P3) {$P_3$};
\node[left] at (P4) {$P_4$};
\node[right] at (P5) {$P_5$};
\node[below] at (P6) {$P_6$};

\node at (0,-3.65) {\small removing two disjoint triangles};

\end{tikzpicture}
\end{minipage}

\caption{Two clique-removal constructions in $\PG(5,q)$ using the same six points in general position. On the left, removing three disjoint lines gives a configuration which is $(5,4)$-saturating. On the right, removing two disjoint triangles gives a $(5,3)$-saturating set.}
\label{fig:pg5-clique-removal}
\end{figure}
\end{example}

\begin{remark}
The size of the saturating set constructed with Corollary~\ref{cor:general_graph} can be computed explicitly. Since the points
$P_1,\ldots,P_k$ are in general position, two distinct lines
$\langle P_i,P_j\rangle$ and $\langle P_a,P_b\rangle$ meet only when the
corresponding edges share a vertex. Hence, if $G$ has no isolated vertices,
then
\[
|\mS_G|=k+(q-1)|E(G)|.
\]
In particular,if $\kappa(G)\geq t$, then every vertex of $G$ has degree at least $t$, and hence
\[
|\mS_G|\geq k+\frac{kt}{2}(q-1),
\]
with equality when $G$ is $t$-regular.
If $G$ is obtained from $K_k$ by deleting complete graphs on pairwise
disjoint vertex sets $A_1,\ldots,A_s$, then
\[
|\mS_G|
=
k+(q-1)\left(
\binom{k}{2}-\sum_{a=1}^s\binom{|A_a|}{2}
\right).
\]
For fixed $k$ and $t$, this size is linear in $q$. On the other hand, the
general lower bound of Theorem~\ref{thm:lower_bound}, applied with $\rho=k-1$ and $t$, gives
a lower bound of order
\[
q^{\frac{t}{k-1}}.
\]
Thus, the graph-based construction is not asymptotically optimal in general,
but it is explicit and geometrically simple. Its validity relies on the dual
bad-hyperplane criterion of Corollary~\ref{cor:graph_dual}.
\end{remark}

\medskip

\bibliographystyle{abbrv}
\bibliography{biblio}

\end{document}